\newtheorem{thm}{Theorem}[section]
\newtheorem{cor}[thm]{Corollary}
\newtheorem{lem}[thm]{Lemma}
\newtheorem{conj}[thm]{Conjecture}
\theoremstyle{definition}
\newtheorem{claim}[thm]{Claim}
\theoremstyle{remark}
\newtheorem{rem}[thm]{Remark}
\newcommand{\Q}{\mathbb{Q}}
\newcommand{\e}{\varepsilon}
\newcommand{\Z}{\mathbb{Z}}
\newcommand{\N}{\mathbb{N}}
\renewcommand{\mod}{\text{ mod }}
\renewcommand{\empty}{\emptyset}
\newcommand{\ch}{\mathbbm{1}}
\newcommand{\vertiii}[1]{{\left\vert\kern-0.25ex\left\vert\kern-0.25ex\left\vert #1 
		\right\vert\kern-0.25ex\right\vert\kern-0.25ex\right\vert}}
\renewcommand{\Re}{\text{Re} }
\renewcommand{\S}{\mathcal{S}}
\newcommand{\sgn}{\text{ sign}}
\begin{document}
	\title{A Counterexample To Hildebrand's Conjecture On Stable Sets}
	\author{Redmond McNamara}
	\address{University of Crete, Department of  Mathematics and Applied Mathematics, Heraklion, Greece}
	\maketitle
\begin{abstract}
	We provide a counterexample to a conjecture of Hildebrand which states that if $\S$ has positive lower density and is stable i.e. for all $d$, $n$ is in $\S$ if and only if $dn$ is in $\S$ except on a set of density $0$ then $\S \cap (\S+1) \cap (\S+2)$ has positive lower density and in particular is nonempty. We further show there exists a stable set of density $1 -\frac{1}{q-1}$ such that $\S \cap \cdots \cap (\S + q -1) = \empty$ when $q$ is a prime matching a bound proven by Hildebrand. Finally, we construct a function $f \colon \N \rightarrow \{\pm 1\}$ such that $f(pn) = -f(n)$ for all but a $0$ density set of $n$ depending on the prime $p$ but which fails the analogues of Sarnak and Chowla's conjectures.
\end{abstract}

	\section{Introduction}

Erd\H{o}s (\cite{Erdos1976}) asked whether there are infinitely many twin smooth numbers, numbers $n$ and $n+1$ all of whose prime factors are less than $n^\e$ for some fixed $\e > 0$. Hildebrand (\cite{Hildebrand1}) solved the question, showing not only that there are infinitely many twin smooth numbers, but that the set of such numbers has positive lower density. In fact Hildebrand showed something much more general.

	We say a set $\S$ is stable if for all $p$, for all but a zero density set of $n$, $n$ is in $\S$ if and only if $pn$ is in $\S$ i.e.
	\begin{equation}\label{whatisstable}
		d(\{ n \in \N \colon \ch_{n \in \S} \neq \ch_{pn \in \S} \}) = 0
	\end{equation}
	We remark that knowing \eqref{whatisstable} for primes $p$ implies the same for all natural numbers. We also remark that examples of stables sets include:
	\begin{enumerate}
		\item $n^\e$-smooth numbers $n$ for any fixed value of $\e$
		\item any set of density $0$
		\item  the set of numbers $n$ for which $\frac{\Omega(n) - \log\log n}{\sqrt{\log \log n}}$ is between 100 and 101
		\item the set of numbers $n$ which have more prime factors which are $1 \mod 10$ than $9 \mod 10$ and more prime factors which are $1 \mod 3$ than $2 \mod 3$
		\item  the intersection of any finite number of stable sets.
	\end{enumerate} 
Hildebrand proved the following remarkable conjecture of Balog:
	\begin{thm}[\cite{Hildebrand1}] \label{stablesetsthm1}
		Let $\S$ be a stable set with positive lower density. Then
		\[
		\underline{d}(\S \cap (\S+1)) > 0.
		\]
	\end{thm}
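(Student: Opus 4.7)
The plan is to combine stability with a Turán–Kubilius-style averaging over prime factors of $n$ and an equidistribution fact for $\ch_\S$ in invertible residue classes modulo $m$.

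First I would extract the following structural consequences of stability. Iterating \eqref{whatisstable} over primes, for every $d\ge 1$ the set $\{n:\ch_\S(n)\neq\ch_\S(dn)\}$ has density zero, so $d(\S)$ exists and equals $\delta=\underline{d}(\S)$. More importantly, for any modulus $m$ and any prime $p\equiv s\pmod m$, multiplication by $p$ sends the coset $r+m\Z$ into $rs+m\Z$, so stability identifies the densities $d(\S\cap(r+m\Z))$ across all reduced residues $r$ coprime to $m$; by Dirichlet's theorem on primes in arithmetic progressions this forces a common value $\delta\eta_m/\varphi(m)$, where $\eta_m$ is the proportion of $\S$ lying on integers coprime to $m$, and $\eta_m\to 1$ along smooth $m$ by inclusion–exclusion.

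Second, I would expand the correlation using Turán–Kubilius. Choose $P<Q$ with $L=\log\log Q-\log\log P\to\infty$ slowly and $Q=o(N)$. Since almost every $n\le N$ has $(1+o(1))L$ prime factors in $[P,Q]$, writing $n=pm$ for such a factor and applying stability to replace $\ch_\S(pm)$ by $\ch_\S(m)$ gives
\[
\sum_{n\le N}\ch_\S(n)\,\ch_\S(n+1)=\frac{1+o(1)}{L}\sum_{P\le p\le Q}\sum_{\substack{m\le N/p\\ p\nmid m}}\ch_\S(m)\,\ch_\S(pm+1)+o(N).
\]
For each fixed $m\in\S$ coprime to $p$, the integers $pm+1$ lie in the invertible residue class $1\pmod m$, which by Step 1 meets $\S$ with density at least $c\,\delta/\varphi(m)$. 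Summing the count of primes $p\in[P,Q]$ with $pm+1\in\S$ over $m\in\S\cap[N/Q]$ and dividing by $L$ yields a lower bound of order $\delta^{2}N$ for the correlation, which is the desired conclusion.

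The main obstacle is the third step: transferring the multiplicative stability of $\S$ into additive equidistribution in the shifted coset $1+m\Z$, uniformly in $m$. Stability acts by dilation rather than translation, so one must combine Dirichlet's theorem on primes in progressions with a Bombieri–Vinogradov-type uniformity in $m$ to ensure the lower bound $c\,\delta/\varphi(m)$ survives the $m$-average, and one must control the density-zero exceptional sets produced by each application of stability uniformly as $m$ grows. I expect this is where most of the technical work lies, and where the analog of the argument would break down when one tries to push it to the triple correlation $\S\cap(\S+1)\cap(\S+2)$, consistent with the counterexample constructed later in the paper.
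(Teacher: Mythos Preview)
The paper does not prove this theorem; it is stated with a citation to Hildebrand's 1985 paper and then used as background. There is therefore no ``paper's own proof'' to compare your proposal against.

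On the merits of your sketch: Steps 1 and 2 are sound, and the Tur\'an--Kubilius opening is indeed the natural first move. The real issue is Step 3, which you flag yourself but whose proposed resolution does not work, for two independent reasons.

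First, stability yields equidistribution of $\S$ in residue classes only for a \emph{fixed} modulus. In your argument the modulus is $m$, which ranges up to $N/P$; there is no Bombieri--Vinogradov-type level-of-distribution statement available for an arbitrary stable set $\S$, so the asserted lower bound $c\,\delta/\varphi(m)$ cannot be obtained uniformly in $m$.

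Second, and more fundamentally, even granting perfect equidistribution of $\S$ in the class $1\pmod m$, the integers $pm+1$ with $p$ prime in $[P,Q]$ are a sparse, highly structured subset of that progression. Knowing the density of $\S$ in the whole progression tells you nothing about how often $\S$ meets this particular subset. What you would actually need is a lower bound for the correlation of $\ch_\S$ with a shifted-prime sequence, which is at least as hard as the original problem; so the argument is circular at this point. Hildebrand's original proof closes the argument by a different device after the averaging step, one that never requires distributional information about $\S$ to a modulus growing with $N$.
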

It is perhaps worth remarking that \cite{MR1619809} later found a specific infinite sequence of $k$ consecutive smooth numbers for any $k$ and \cite{Joni} show that with logarithmic density the probability that $n$ and $n+1$ are smooth is the product of the probabilities. Hildebrand (\cite{Hildebrand2}) was also able to show to following generalization of Theorem \ref{stablesetsthm1}
\begin{thm}[\cite{Hildebrand2}]
	Let $k$ be a natural number and let $\S$ be a stable set with 
	\[
	\underline{d}(\S) > 1 - \frac{1}{k-1}.
	\]
	Then for any integers $h_1, \ldots, h_k$
	\[
	\underline{d}((\S + h_1) \cap \cdots \cap (\S + h_k)) > 0.
	\]
\end{thm}
This led Hildebrand to conjecture
		\begin{conj}[\cite{Hildebrand2}]\label{conj}
		Let $\S$ be a stable set with positive lower density. Then for any $k$
		\[
		\underline{d}(\S \cap (\S+1) \cap (\S+2 )\cap \cdots \cap (\S+(k-1))) > 0. 
		\]
	\end{conj}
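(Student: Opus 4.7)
Since the paper's title promises a counterexample rather than a proof, my plan is to show that Hildebrand's conjecture fails already at $k=3$ by constructing a stable set $\S$ of positive lower density with $\underline{d}(\S \cap (\S+1) \cap (\S+2)) = 0$. Following the hint in the abstract, let $\chi$ be the nontrivial character mod $3$ (so $\chi(1) = 1$, $\chi(2) = -1$), choose a real parameter $T$, set
\[
f(n) := \chi(m)\, m^{iT}, \qquad n = 3^\ell m, \quad (m,3) = 1,
\]
and take $\S := \{n \in \N \colon \Re f(n) > 0\}$. The key is to pick $T$ \emph{pretentious} to $\chi$ in the sense of Granville--Soundararajan, i.e., so that $\chi(p)\, p^{iT}$ is close to $1$ on a density-one subset of primes $p \neq 3$; the existence of such $T$ is standard pretentious-distance machinery.

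Given such a $T$, stability should follow almost automatically: for $p \neq 3$ we have $f(pn) = \chi(p)\, p^{iT}\, f(n) \approx f(n)$ by the choice of $T$, so the sign of $\Re f$ is preserved outside a sparse set of $n$, while for $p=3$, $f(3n) = f(n)$ by construction. Emptiness of the triple intersection is the really clean part: among any three consecutive integers $n, n+1, n+2$ there is exactly one pair $(a,b)$ with $a \equiv 1$, $b \equiv 2 \pmod 3$, differing by at most $2$. For this pair $a \in \S$ requires $\cos(T\log a) > 0$ while $b \in \S$ requires $\cos(T\log b) < 0$, yet $T\log a$ and $T\log b$ differ by only $O(T/a)$; the simultaneous conditions therefore force $\cos(T\log a)$ into a neighborhood of zero of width $O(T/a)$, which summed over $n \leq N$ contributes $o(N)$. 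The third integer, the multiple of $3$, is handled separately after peeling off the power of $3$ and invoking the same oscillation bound against the shifted argument $T\log m$.

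The main obstacles I expect are twofold. First, showing $\underline{d}(\S) > 0$: heuristically $\Re f(n) > 0$ holds on roughly half the integers, but $n^{iT}$ is an oscillating Archimedean character, and to compute the \emph{arithmetic} (rather than logarithmic) density of $\{n \leq N \colon \Re(n^{iT}\chi(n)) > 0\}$ one must pass through the change of variables $u = T\log n$ and argue that the mass, concentrated near $u = T\log N$, still sweeps across many periods so that the density stabilizes at a positive value uniformly in $N$. Second, making the stability approximation quantitative: one needs $|1 - \chi(p)p^{iT}|$ small on enough primes that $|\{n \leq N \colon \ch_{n \in \S} \neq \ch_{pn \in \S}\}| = o(N)$ for every prime $p$ simultaneously, which is essentially a Hal\'asz-type estimate for multiplicative functions pretentious to $\chi(n)\,n^{iT}$. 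Selecting a single $T$ that works for all primes at once is the genuine bottleneck, and I would approach it through the pretentious multiplicative-function theory of Granville--Soundararajan and the estimates of Matom\"aki--Radziwi\l\l.
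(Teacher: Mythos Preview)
Your heuristic matches the paper's starting point exactly, but the proposal contains a genuine gap at precisely the point you flag as the bottleneck: there is \emph{no} single real $T$ with $\chi(p)\,p^{iT}$ close to $1$ for a density-one set of primes, let alone for all primes. Indeed, for fixed $T$ the sequence $(p^{iT})_p$ equidistributes on $S^1$ (since $T\log p \bmod 2\pi$ equidistributes by the prime number theorem), so $p^{iT}$ lands near $\pm 1$ only for a zero-density set of primes. Pretentious-distance machinery does not rescue this: it would require $\sum_p p^{-1}(1-\Re(\chi(p)p^{iT}))<\infty$, which fails for every $T$ for the same equidistribution reason. Moreover, stability demands the approximation for \emph{every} prime $p$ individually, not merely most of them; a single bad prime already destroys the stability condition.

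The paper confronts this head-on. Instead of one $T$, it takes an increasing sequence $T_k\to\infty$, choosing each $T_k$ via Kronecker's theorem (linear independence of $\log p$ over $\Q$) so that $|p^{iT_k}-\chi(p)|\le\e_k$ for all primes $p\le D_k$---only finitely many primes at a time. The set $\S$ is then defined piecewise on intervals $[N_{k,j},N_{k,j+1})$ using both $T_k$ and $T_{k+1}$ simultaneously, with slowly varying thresholds $\alpha(n),\beta(n)$ that ``turn on'' the $T_{k+1}$-condition and ``turn off'' the $T_k$-condition gradually enough that multiplying by a fixed prime $p$ only moves $n$ across one interval boundary, changing $\alpha,\beta$ by at most $\e_k$. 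Stability for $p$ then holds once $k$ is large enough that $p\le D_k$, and the transition mechanism is what makes the argument work. Your sketch of why the triple intersection has density zero and why the lower density is positive is morally correct and close to the paper's, but both claims must be redone in this two-parameter $(T_k,T_{k+1})$ setting; in particular, the density-zero argument uses that at every $n$ at least one of $\alpha(n),\beta(n)$ vanishes, so one of the two real-part conditions is genuinely active.
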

	A weaker version of this conjecture could be
	\begin{conj}\label{weakconj}
		Let $\S$ be a stable set with positive lower density. Then for any $k$
		\[
		\S \cap (\S+1) \cap (\S+2) \cap \cdots \cap (\S+(k-1)) \neq \emptyset. 
		\]
	\end{conj}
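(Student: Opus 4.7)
The plan is to construct an explicit $\S$ that refutes even the weaker conjecture, following the heuristic in the abstract. Fix a real parameter $T$ to be tuned, let $\chi$ be the nontrivial Dirichlet character modulo $3$, and declare $3^\ell m \in \S$, for $\gcd(m,3) = 1$ and $\ell \geq 0$, precisely when $\Re(m^{iT}\chi(m)) > 0$. Thus $\S$ is the preimage of a half-plane under the almost-multiplicative function $f(m) = m^{iT}\chi(m)$, extended to be $3$-adically invariant.

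For stability at a prime $p \neq 3$, observe that $f(pm) = p^{iT}\chi(p) f(m)$; since $\chi(p)^2 = 1$, the rotation $p^{iT}\chi(p)$ is close to $1$ exactly when $p^{iT} \approx \chi(p)$. Under this condition, the sign of $\Re(f(m))$ flips under $m \mapsto pm$ only for those $m$ with $|\Re(f(m))|$ extremely small, a density-zero set; stability at $p=3$ is automatic by construction. The first serious point is to produce a single $T$ for which $p^{iT} \approx \chi(p)$ holds with enough uniformity over all primes, so that summing the exceptional densities over $p$ still yields zero. I would exploit the $\Q$-linear independence of $\{\log p\}$ together with a Kronecker/Weyl equidistribution argument, or a pretentious-style construction of $T$ as a limit of successive Diophantine approximations driven by the first $k$ primes. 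Positive lower density of $\S$ then follows from equidistribution of $\arg f(m)$ on the unit circle, giving $\underline{d}(\S) \approx 1/2$.

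The main obstacle is arranging $\S \cap (\S+1) \cap \cdots \cap (\S + k - 1) = \emptyset$ for some explicit $k$. Among any three consecutive integers at least two are coprime to $3$, with one $\equiv 1$ and the other $\equiv 2 \pmod 3$; call them $m_1$ and $m_2$. Membership in $\S$ then demands $\Re(m_1^{iT}) > 0$ and $\Re(m_2^{iT}) < 0$. Since $m_2/m_1 = 1 + O(1/n)$, one has $(m_2/m_1)^{iT} = 1 + O(T/n)$, so $m_1^{iT}$ and $m_2^{iT}$ are pushed into the same half-plane once $n$ exceeds an explicit threshold $N_0(T)$. This already forces $\S \cap (\S+1) \cap (\S+2)$ to be finite, contained in $[1, N_0(T)]$.

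To pass from finite to empty I would enlarge $k$: for each candidate $n \leq N_0(T)$, since $\S$ has density $\sim 1/2$ there is some shift $j$ with $n + j \notin \S$, and choosing $k$ beyond the maximum such $j$ over the finitely many candidates kills every witness. Arranging a single $T$ that simultaneously secures the Kronecker conditions for stability and ensures each small $n$ leaves $\S$ within a fixed window is where I expect the bulk of the technical work to live; a Baire-category or measure-theoretic argument on the set of admissible $T$, showing that generic $T$ satisfies both a Diophantine condition on primes and a separation condition on the finitely many small candidates, appears to be the cleanest route.
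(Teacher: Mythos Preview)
Your heuristic matches the paper's, but the approach has a genuine gap at the point you flag as ``the first serious point'': no single real $T$ can make $\S$ stable. Stability at a prime $p\neq 3$ requires the density of $\{n:\ch_{n\in\S}\neq\ch_{pn\in\S}\}$ to be \emph{exactly} zero, not merely small. With $f(m)=m^{iT}\chi(m)$ and $p^{iT}\chi(p)=e^{i\theta_p}$, multiplication by $p$ rotates $f(m)$ by the fixed angle $\theta_p$, so the sign of $\Re f(m)$ flips on an arc of length $|\theta_p|$ near the imaginary axis; since $\arg f(m)$ equidistributes, the exceptional set has density $|\theta_p|/\pi$, which is positive unless $\theta_p=0$. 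But $p^{iT}=\chi(p)$ exactly cannot hold for more than one prime: for two primes $p,q$ with $\chi(p)=\chi(q)=1$ it would force $T\log p,\,T\log q\in 2\pi\Z$ and hence $\log p/\log q\in\Q$. Kronecker/Weyl only lets you make finitely many $\theta_p$ small, each with a fixed positive error; a Baire-category argument cannot rescue this because the set of admissible $T$ for any two primes is already empty. This is precisely the obstruction the paper isolates, and its entire technical content is the workaround: one takes a \emph{sequence} $T_k\to\infty$ with $|p^{iT_k}-\chi(p)|\leq\e_k$ for all $p\leq D_k$, and on the interval $[N_k,N_{k+1})$ defines membership via a condition that interpolates smoothly between $T_k$ and $T_{k+1}$ using auxiliary cutoffs $\alpha(n),\beta(n)$. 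The slow transition is what keeps $\S$ stable while letting the effective $T$ grow so that eventually every prime is controlled.

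Two smaller remarks. First, the paper obtains $k=3$ directly by showing $d(\S\cap(\S+1)\cap(\S+2))=0$ and then deleting this density-zero set; your ``enlarge $k$'' manoeuvre is unnecessary and would in any case only yield some unspecified $k$. Second, because the paper's construction imposes conditions from both $T_k$ and $T_{k+1}$ simultaneously during the transition, the lower density comes out as $\tfrac14$ rather than $\tfrac12$.
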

	\begin{rem}\label{density0}
		Note that this is equivalent to the conjecture that the \emph{upper} density of $\S \cap (\S+1) \cap (\S+2) \cap \cdots \cap (\S+(k-1))$ is positive. Otherwise, given a set where the density is $0$, we could delete the set $\S \cap \cdots \cap (\S +k)$ from $\S$ and since that would only affect a set of density $0$ it would not change whether $\S$ was a stable set.
	\end{rem}
Tao and Ter\"av\"ainen (\cite{TJ}) showed a number of variants of Conjecture \ref{conj} hold. For example if $\S$ is uniformly distributed in small intervals and $\underline{d}(\S) > 1 - \frac{1}{k-\frac{4}{3} + o(1)}$ then the conjecture holds. In this note we prove
\begin{thm} \label{mainthm}
	Conjecture \ref{weakconj} fails for $k = 3$ for a set of density $\frac{1}{2}$. 
\end{thm}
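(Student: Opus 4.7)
The plan is to follow the heuristic in the abstract literally. I would fix a real parameter $T \neq 0$, define a completely multiplicative function $f$ on integers coprime to $3$ by $f(n) = n^{iT}\chi(n)$, and extend it to all positive integers via $f(3^\ell m) = f(m)$ whenever $\gcd(m,3) = 1$. Then I would take $\S$ to be the preimage under $f$ of the open first quadrant $Q = \{z \in \C : \Re z > 0,\ \Im z > 0\}$; concretely, $n \in \S$ iff $f(n') \in Q$, where $n' = n/3^{v_3(n)}$ is the $3$-free part of $n$. Choosing a quadrant rather than a half-plane is what will give lower density exactly $\tfrac{1}{4}$ instead of $\tfrac{1}{2}$.

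The density and the emptiness of the triple intersection are the easy parts. Weyl equidistribution of $T\log n' \pmod{2\pi}$ (for $T \neq 0$) gives that $\{n' \in \N : \gcd(n',3) = 1,\ f(n') \in Q\}$ has natural density $\tfrac{1}{4} \cdot \tfrac{2}{3}$ in $\N$, and summing over the $3$-adic fibres $\{n : v_3(n) = \ell\}$ yields $\underline d(\S) = \overline d(\S) = \tfrac{1}{6}\sum_{\ell \geq 0} 3^{-\ell} = \tfrac{1}{4}$. For the triple intersection, any three consecutive integers contain a unique multiple of $3$, and the two non-multiples $a < b$ always satisfy $\chi(a)\chi(b) = -1$, so $f(a)/f(b) = -(a/b)^{iT} \to -1$ as $n \to \infty$; hence for all sufficiently large $n$, the arguments of $f(a)$ and $f(b)$ differ by nearly $\pi$ and cannot both lie in the quadrant $Q$. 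Finitely many small-$n$ exceptions can then be excised from $\S$ by hand, which affects neither density nor stability.

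The main obstacle is exact stability. For $p = 3$ this is automatic from the extension convention, since $f(3n) = f(n)$ identically gives $3n \in \S \iff n \in \S$. For any prime $p \neq 3$, setting $e^{i\beta_p} := p^{iT}\chi(p)$, the symmetric difference $\S \triangle p^{-1}\S$ is precisely the set of $n$ for which rotating $f(n')$ by $\beta_p$ crosses the boundary of $Q$, and by equidistribution this set has density proportional to $|\beta_p|$. This is positive whenever $\beta_p \neq 0$, i.e., always, since $p^{iT}\chi(p) = 1$ would force $T\log p \in \pi\Z$, which fails for every prime $p$ at any $T \neq 0$. I expect this to be where the real work lies. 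To force the bad-set density all the way to zero I would call upon the Hal\'asz and Granville--Soundararajan theory of pretentious multiplicative functions, picking $T$ so that the pretentious distance $D(n^{iT},\chi)$ is finite (so the $\beta_p$ are controlled in an aggregate $\ell^2$ sense), and then modifying $\S$ on a density-zero thin region near the boundary of $Q$, prime by prime, so that the resulting set is genuinely $p$-stable for every $p$ without destroying either the lower-density value $\tfrac{1}{4}$ or the emptiness of the triple intersection.
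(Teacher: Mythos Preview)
Your computations for the density $\tfrac14$ and for the eventual emptiness of the triple intersection are correct, but the plan for achieving stability has a genuine gap that cannot be repaired within a fixed-$T$ framework.

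First, there is no real $T$ for which the pretentious distance $D(n^{iT},\chi)$ is finite. For any fixed $T$, the values $p^{iT}\chi(p)$ equidistribute on the unit circle as $p$ ranges over primes (this is equivalent to the non-vanishing of $L(1+it,\chi)$ on the line $\Re s = 1$), so
\[
\sum_{p\le x}\frac{1-\Re\bigl(p^{iT}\chi(p)\bigr)}{p}\ \sim\ \log\log x\ \to\ \infty.
\]
In particular the angles $\beta_p$ are not small in any aggregate sense: for every $\e>0$ a positive proportion of primes (in the $\sum 1/p$ sense) satisfy $|\beta_p|>\e$. Second, even granting some control on the $\beta_p$, the ``density-zero modification near $\partial Q$'' cannot work. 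For each prime $p\ne 3$ the set $\S\,\triangle\, p^{-1}\S$ has density $\asymp|\beta_p|>0$, and it lives on a strip of angular width $|\beta_p|$ whose position around $\partial Q$ depends on $p$. Repairing stability for a single prime already forces you to alter $\S$ on a set of positive density; doing this for all primes simultaneously destroys the construction.

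The paper circumvents this by abandoning any single $T$. It selects a sequence $T_k\to\infty$ with $|p^{iT_k}-\chi(p)|\le\e_k$ for all primes $p\le D_k$ (possible since the numbers $\log p$ are $\Q$-linearly independent, so finitely many $p^{iT}$ can be placed anywhere on the circle), and defines $\S$ piecewise using both $T_k$ and $T_{k+1}$ on $[N_k,N_{k+1})$, with slowly moving thresholds $\alpha(n),\beta(n)$ that interpolate between the two conditions. For any fixed prime $p$, once $k$ is large enough that $p\le D_k$, the instability contributed on $[N_k,N_{k+1})$ is $O(\e_k)\to 0$, and the smooth interpolation prevents the transitions between consecutive $T_k$ from themselves creating instability. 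This moving-$T$ device is the essential idea missing from your outline.
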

We will do this by constructing a set $\S$ for which the intersection has density zero. Applying Remark \ref{density0}, one can obtain a set where the intersection is actually empty. More generally, we will show
\begin{thm} \label{mainthmpart2}
	Conjecture \ref{weakconj} fails for any prime $k$ for a set of density $1 - \frac{1}{k-1}$. 
\end{thm}
We remark that this perfectly matches Hildebrand's bound, meaning this is essentially best possible.

We also give a construction which may be of interest to those studying multiplicative functions. Note that a stable set can be thought of as a binary valued function $f$ satisfying $f(pn) = f(n)$ for all but a density $0$ set of $n$ depending on $p$. We can likewise use a similar construction to prove the following.
\begin{thm}\label{liouvillelike}
	There exists a function $f \colon \N \rightarrow \{\pm 1\}$ such that, for any prime $p$, the set
	\[
	\{ n \in \N \colon f(n) \neq -f(pn) \}
	\]
	has density $0$ and for which
	\[
	\frac{1}{x} \sum_{n \leq x} f(n) f(n+1) < -\e < 0
	\]
	for some $\e$ and for all $x$ sufficiently large. Furthermore
	\[
	\lim_{H \rightarrow \infty} \lim_{x \rightarrow \infty} \frac{1}{x} \sum_{n \leq x}  \left| \frac{1}{H} \sum_{h \leq H} f(n) \ch_{n = 1 \mod 3} \right|  = \frac{1}{3}.
	\]
\end{thm}
We remark that the construction is somewhat similar to constructions in \cite{MR3435814} and \cite{MRT}.
The property that $f(pn) = -f(n)$ except on a set of density $0$ means $f$ behaves somewhat like the Liouville function, the completely multiplicative function $\lambda$ such that $\lambda(pn) = -\lambda(n)$ for any prime $p$ and any $n$. The Chowla and Sarnak conjectures for the Liouville function suggest that $\lambda$ does not correlate with any of its shifts nor with any entropy $0$ sequence. However, Theorem \ref{liouvillelike} suggests that there exists a function $f$ such that $f(pn) = - f(n)$ 100\% of the time but which fails the Chowla and Sarnak conjectures. Thus, it is not enough to use that $f(pn) = - f(n)$ 100\% of the time in order to prove those conjectures. In particular, there is a method introduced to the field by Bourgain, Sarnak and Ziegler (\cite{BSZ}) sometimes called the Katai criterion, the Turan-Kubilius method, or some combination of the six names which has been used to show many instances of the Sarnak conjecture. And yet, the Bourgain-Sarnak-Ziegler-Katai-Turan-Kubilius method works for any function which satisfies $f(pn) = - f(n)$ 100\% of the time. Therefore, Theorem \ref{liouvillelike} rules out this method by itself and others like it when it comes to proving the full Sarnak or Chowla conjectures.

\section*{Notation}
We use $\underline{d}$, $\overline{d}$ and $d$ for lower density, upper density and density respectively. We say $x = y + O(z)$ if there is a constant $C$ such that $|x - y| \leq C |z|$. We say $x = o_{k \rightarrow \infty}(y)$ if $\lim_{k\rightarrow\infty} \frac{x}{y} = 0$. We say $x \lesssim y$ if there exists a constant $C$ such that $x \leq C y$. We use $[N]$ for $\{1, 2, \ldots, N\}$ and $[a,b]$, $(a,b]$ and $(a,b)$ for closed, half-open and open intervals. We set $e(t) = e^{2\pi i t}$. Let $\ch_{x \in A}$ denote the indicator function which is $1$ if $x$ is in $A$ and $0$ otherwise.

\section*{Acknowledgments}
I would like to greatly thank Terence Tao for suggesting this problem and sharing his thoughts, in particular that near counterexamples from his work with Matomaki and Radziwill might be useful (see \cite{MRT}). I would also like to thank Joni Ter\"av\"ainen and Nikos Frantzikinakis for their comments on earlier drafts. \thanks{Supported by the research grant   ELIDEK-HFRI-NextGenerationEU-15689}.

\section{Proof of the Main Theorem}\label{mainsec}

The basic idea is to choose $T$ such that 
\[
\begin{cases}
p^{iT} \approx 1 & \text{ whenever } p = 1 \mod 3 \\
p^{iT} \approx -1 & \text{ whenever } p = -1 \mod 3
\end{cases}
\]
and then define our set $\S$ by
	\begin{equation} \label{pequation}
\begin{cases}
	3^\ell \cdot n \in \S & \text{ whenever } n = 1 \mod 3 \text{ and } \Re(n^{iT}) > 0 \\
	3^\ell \cdot n \in \S & \text{ whenever } n = -1 \mod 3 \text{ and } \Re(n^{iT}) < 0 \\
	3^\ell \in \S & \text{ for all } \ell.
\end{cases}
\end{equation}
Since $n^{iT} \approx (n+1)^{iT}$ for large $n$, for most $n$ either $\Re(n^{iT}) > 0, \Re((n+1)^{iT}) > 0$ and $\Re((n+2)^{iT}) > 0$, one of which is $-1 \mod 3$ and therefore not in $\S$ or $\Re(n^{iT}) < 0, \Re((n+1)^{iT}) < 0$ and $\Re((n+2)^{iT}) < 0$, one of which is $-1 \mod 3$ and therefore not in $\S$. Thus, $\S \cap (\S+1) \cap (\S+2)$ will have density $0$. Furthermore, for any $p = 1\mod 3$ we have $(pn)^{iT} = p^{iT} n^{iT} \approx n^{iT}$ by choice of $T$, so for most $n$ multiplying by $p$ does not change whether $n$ is $1$ or $-1 \mod 3$ and also does not change whether $\Re(n^{iT}) > 0$. Thus, $n$ is in $\S$ if and only if $pn$ is in $\S$ unless $\Re(n) \approx 0$.  Similarly for $p = -1 \mod 3$, $\Re((pn)^{iT}) \approx - \Re(n^{iT})$ so for most $n$ not divisible by $3$ multiplying by $p$ swaps whether $n$ is $1$ or $-1 \mod 3$ and also swaps whether $\Re(n^{iT}) > 0$ so again $n$ is in $\S$ if and only if $pn$ is in $\S$.

The main difficulty with this solution is that for a given finite $T$ we can only ensure \eqref{pequation} holds for finitely many $p$ and with a small but positive error term. The rest of the paper will be dealing with these difficulties by allowing $T$ to grow in such a way that does not prevent $\S$ from being a stable set.

First, we need a few lemmas.

	\begin{lem}\label{ChooseT}
	For all natural numbers $D$, all real numbers $\e, T_0 > 0$  and all functions $f \colon [D] \rightarrow S^1$ there exists $T > T_0$ such that for all $p \leq D$
	\[
	| p^{iT} - f(p) | \leq \e.
	\]
\end{lem}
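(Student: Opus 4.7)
The plan is to recast Lemma~\ref{ChooseT} as a simultaneous Diophantine approximation problem and invoke Kronecker's equidistribution theorem. Writing $p^{iT} = e^{iT\log p}$, the condition $|p^{iT} - f(p)| \le \e$ is equivalent, up to an absolute constant, to placing the angle $T\log p/(2\pi) \bmod 1$ within an arc of length $O(\e)$ around $\arg f(p)/(2\pi)$ in $\R/\Z$. I will first reduce to $p$ ranging over primes: for a composite $n = \prod p_j^{e_j} \in [D]$, the identity $n^{iT} = \prod (p_j^{iT})^{e_j}$, together with continuity of multiplication on $S^1$, transfers the bound from primes to composites at the cost of shrinking $\e$ by a factor depending only on $D$, provided we first replace $f$ on composites by its completely multiplicative extension from the primes (only the values at primes are substantively constrained).

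So let $p_1 < p_2 < \cdots < p_k$ be the primes in $[D]$ and consider the one-parameter curve $\gamma \colon \R \to \mathbb{T}^k$ defined by $\gamma(T) = \bigl(T\log p_j/(2\pi) \bmod 1\bigr)_{j=1}^k$. The key structural input is that $\log p_1, \ldots, \log p_k$ are $\Q$-linearly independent: any integer relation $\sum_j a_j \log p_j = 0$ exponentiates to $\prod_j p_j^{a_j} = 1$, which by unique factorization forces $a_j = 0$ for every $j$. By Weyl's equidistribution theorem for real one-parameter flows on tori (equivalently, Kronecker's density theorem), this linear independence implies that $\gamma$ is equidistributed, and in particular dense, in $\mathbb{T}^k$. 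Consequently the target point $\bigl(\arg f(p_j)/(2\pi)\bigr)_{j=1}^k$ is approximated to within $O(\e)$ by $\gamma(T)$ for some $T \in \R$; and since the restriction of $\gamma$ to any half-line $(T_0, \infty)$ remains equidistributed (a translation of an equidistributed orbit is equidistributed), the approximation can be arranged with $T > T_0$, as required.

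The only conceptual ingredient is the $\Q$-linear independence of the prime logarithms, which comes for free from unique factorization; beyond that, the argument is a direct invocation of Kronecker's theorem, so I do not anticipate a substantive obstacle.
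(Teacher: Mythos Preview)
Your proposal is correct and follows essentially the same approach as the paper: both arguments reduce to the $\Q$-linear independence of $\{\log p : p \text{ prime},\, p \le D\}$ (derived from unique factorization) and then invoke Kronecker/Weyl equidistribution on the torus to locate $T > T_0$ with the desired approximation. Your opening paragraph on reducing from composites to primes is unnecessary here, since in the paper's convention $p$ ranges over primes only (indeed the statement would be false for arbitrary $f$ on all of $[D]$).
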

\begin{proof}
	Classically, the points $\{ \log p \colon p \leq D \}$ are linearly independent over $\Q$. (Using logarithm rules and exponentiating both sides, any linear dependence would produce a way of writing $1$ as a product of primes). Thus $\{T \log 2, T \log 3, T \log 5, \ldots \}$ equidistributes in the torus so the set for which $| p^{iT} - f(p) | \leq \e$ for all $p \leq D$, which has size $\e^{O(\pi(D))}$ where $\pi(D)$ is the number of primes less than $D$, is visited by some sequence of $T$ tending to infinity. Choosing some $T > T_0$ completes the proof.
\end{proof}
\begin{restatable}{lem}{almosteq}
	\label{almosteq}%
	Let $T_1$ and $T_2$ be real numbers greater than $2$ and let $N \geq 2, a_1, a_2$ and $m$ be nonnegative integers with $0 \leq a_i < m$. Then
	\begin{align*}
	\frac{|\{ n \leq N \colon n^{iT_1} \in e([\frac{a_1}{m}, \frac{a_1+1}{m})) \text{ and } n^{iT_2} \in e([\frac{a_2}{m}, \frac{a_2+1}{m})) \}|}{N} \\ = \frac{1}{m^2} + O\left( \frac{1}{T_1} + \frac{T_1 \log T_1}{T_2} + \frac{T_2 \log T_1}{N} \right).
	\end{align*}
\end{restatable}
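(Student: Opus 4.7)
The plan is to expand both arc-indicators in Fourier series and reduce the count to a double sum of exponential sums $\sum_{n \leq N} n^{i(k_1 T_1 + k_2 T_2)}$, which I then estimate by Abel summation.

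First, I would choose a truncation parameter $K$ (to be optimized), and for each $i = 1, 2$ sandwich the indicator $\ch_i$ of the arc $e([a_i/m, (a_i+1)/m))$ between Beurling--Selberg/Vaaler trigonometric polynomials $P_i^{\pm}$ of degree $K$, with $\|P_i^{\pm} - \ch_i\|_{L^1} \lesssim 1/K$, $\widehat{P_i^{\pm}}(0) = 1/m + O(1/K)$, and $|\widehat{P_i^{\pm}}(k)| \lesssim \min(1/m, 1/|k|)$. With $\theta_i = T_i \log n/(2\pi)$ the count becomes
\[
\sum_{|k_1|,|k_2| \leq K} \widehat{P_1^\pm}(k_1)\, \widehat{P_2^\pm}(k_2) \sum_{n \leq N} n^{i(k_1 T_1 + k_2 T_2)},
\]
whose $(0,0)$ mode supplies the main term $N/m^2$.

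Second, for every nonzero $\tau = k_1 T_1 + k_2 T_2$ I would use Abel summation to obtain
\[
\sum_{n \leq N} n^{i\tau} = \frac{N^{1+i\tau}}{1+i\tau} + O\bigl(|\tau| \log N + 1\bigr),
\]
so $\bigl|\sum_{n\leq N} n^{i\tau}\bigr| \lesssim N/(1+|\tau|) + |\tau| \log N$, alongside the trivial bound $N$. The ``axis'' modes $k_2 = 0 \neq k_1$ contribute $\lesssim (1/m)\sum_{k_1} (1/|k_1|)(N/(|k_1| T_1)) \lesssim N/(mT_1)$, producing the $1/T_1$ term; the symmetric $k_1 = 0 \neq k_2$ axis gives the smaller $1/T_2$, absorbed into $T_1 \log T_1/T_2$.

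Third and most delicate are the cross modes with $k_1 k_2 \neq 0$. For fixed $k_2$, the ``resonant'' integer $k_1^* \approx -k_2 T_2/T_1$ has magnitude $\approx |k_2| T_2/T_1$, hence Fourier weight $|\widehat{P_1^{\pm}}(k_1^*)| \lesssim T_1/(|k_2| T_2)$. Using the trivial bound $N$ for $k_1$ near $k_1^*$ and the $N/|\tau|$ bound dyadically for the remaining $k_1$, then summing over $k_2$, I would bound the cross-mode contribution by $\lesssim NT_1 \log T_1/T_2$, producing the $T_1 \log T_1/T_2$ term (with $\log T_1$ arising from the harmonic sum across dyadic scales of $|\tau|$ up to roughly $T_1$). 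Finally, the Euler--Maclaurin error $O(|\tau|\log N)$ summed over $|k_i| \leq K$ is $O(KT_2 \log N)$, which combined with the approximation error $\lesssim N/K$ yields the $T_2 \log T_1/N$ term after choosing $K$ optimally (observing that $\log N$ can be replaced by $\log T_1$ since contributions with $|\tau| \gg T_1$ are controlled by the $N/|\tau|$ bound). The main obstacle is this near-resonance counting for cross modes: one must balance the Fourier weight against both the trivial bound $N$ and the $N/|\tau|$ bound across the dyadic ranges of $|\tau|$.
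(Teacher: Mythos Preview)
Your Fourier/Erd\H{o}s--Tur\'an approach is genuinely different from the paper's, which proceeds by a direct geometric partition: the paper shows that the continuous set $\{x\in[0,1]:(Nx)^{iT_j}\in e([a_j/m,(a_j+1)/m))\text{ for }j=1,2\}$ has Lebesgue measure $1/m^2+O(1/T_1+T_1\log T_1/T_2)$ by comparing lengths of level-set intervals under the dilations $x\mapsto e^{2\pi/(mT_j)}x$, and then passes to integers by observing that this set is a union of $O(T_2\log T_1)$ intervals, each contributing an error of at most $1$ to the count. Your harmonic-analysis route is more systematic, and the axis/cross-mode analysis does correctly recover the first two error terms $O(1/T_1+T_1\log T_1/T_2)$.

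The gap is in the discretisation step. Balancing the Beurling--Selberg approximation error $N/K$ against the accumulated Euler--Maclaurin remainder, which is of order $\sum_{|k_i|\le K}|\widehat P_1(k_1)\widehat P_2(k_2)|\,|\tau|\log N\asymp KT_2(\log K)(\log N)$, yields after optimising in $K$ an error of size roughly $\sqrt{T_2(\log N)/N}$, not $T_2\log T_1/N$. Even granting your replacement of $\log N$ by $\log T_1$ (which can in fact be justified by a second integration by parts in Euler--Maclaurin, though not for the reason you give), the optimisation still only produces $\sqrt{T_2(\log T_1)/N}$; no choice of $K$ manufactures the stated third term from these two competing contributions. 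This weaker bound is still $o(1)$ in the regime $T_1\ll T_2\ll N$, which is all that the application to Theorem~\ref{mainthm} requires, so your argument would suffice there; but it does not prove Lemma~\ref{almosteq} as stated. To recover the sharp $T_2\log T_1/N$ you would need to abandon the Fourier discretisation and instead use, as the paper does, that the product of the two arc-indicators is piecewise constant on $[1,N]$ with $O(T_2\log T_1)$ pieces.
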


%\begin{lem}\label{almosteq}
%	Let $T_1$ and $T_2$ be real numbers greater than $1$ and let $N \geq 2, a_1, a_2$ and $m$ be nonnegative integers with $0 \leq a_i < m$. Then
%	\begin{align*}
%	\frac{|\{ n \leq N \colon n^{iT_1} \in e([\frac{a_1}{m}, \frac{a_1+1}{m})) \text{ and } n^{iT_2} \in e([\frac{a_2}{m}, \frac{a_2+1}{m})) \}|}{N} \\ = \frac{1}{m^2} + O\left( \frac{1}{T_1} + \frac{T_1 \log T_1}{T_2} + \frac{T_2 \log T_1}{N} \right).
%	\end{align*}
%\end{lem}
We postpone the proof to Section \ref{lemmaproof}.
\begin{rem}
	We remark that the error term could be improved but what is really important for us is that the error term is $o(1)$ in the regime $T_1 \ll T_2 \ll N$.
\end{rem}
\begin{rem}
	We remark that for fixed $T_i$ the error term cannot go to $0$ as $N \rightarrow \infty$. For example, suppose we $T_1 = 1$, $T_2 = \frac{1}{2 \log 2}$, $m = 2$ and $a_2 = 0$ so that $(2n)^{iT_2} = -n^{iT_2}$. Then for some choices of $N$ we will have the following points contained in our interval:
	\[
	\left(\frac{N}{2}, N\right] \cup \left(\frac{N}{8}, \frac{N}{4}\right] \cup 	\left(\frac{N}{32}, \frac{N}{16} \right] \cup \cdots
	\]
	which by the geometric series formula should have size $\approx \frac{2}{3} N$. Similarly, for some choices of $N$ the complement will be our interval which will have size approximately $\frac{1}{3}N$.
\end{rem}

It is perhaps worth remarking that this gives the following corollary which extends Lemma \ref{almosteq} to arbitrary real intervals, although this is not really necessary for our main argument.
\begin{cor}\label{realcor}
		Let $T_1$ and $T_2$ be real numbers greater than $1$ and let $N \geq 2$ and $m$ be natural numbers. Let $I_1$ and $I_2$ be subsets of $[0,1]$. Then
	\begin{align*}
		\frac{|\{ n \leq N \colon n^{iT_1} \in e(I_1) \text{ and } n^{iT_2} \in e(I_2) \}|}{N} \\ = |I_1||I_2| + O\left( \frac{1}{m} \right)  + m \cdot O\left( \frac{1}{T_1} + \frac{T_1 \log T_1}{T_2} + \frac{T_2 \log T_1}{N} \right).
	\end{align*}
\end{cor}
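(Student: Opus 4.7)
The plan is to sandwich $I_1$ and $I_2$ between unions of length-$1/m$ dyadic intervals and apply Lemma \ref{almosteq} term-by-term. Concretely, for each $i \in \{1,2\}$ introduce
\[
I_i^- := \bigcup_{[a/m,(a+1)/m) \subseteq I_i} \left[\tfrac{a}{m},\tfrac{a+1}{m}\right), \qquad I_i^+ := \bigcup_{[a/m,(a+1)/m) \cap I_i \neq \emptyset} \left[\tfrac{a}{m},\tfrac{a+1}{m}\right).
\]
Then $I_i^- \subseteq I_i \subseteq I_i^+$, and for $I_i$ a finite union of intervals a direct count of the at most two boundary atoms per constituent interval gives $|I_i^\pm| = |I_i| + O(1/m)$; the general measurable case reduces to this by first approximating $I_i$ in $L^1$ by a finite union of intervals.

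Next, I would sandwich the indicator $\ch_{n^{iT_1} \in e(I_1)} \, \ch_{n^{iT_2} \in e(I_2)}$ between the analogous indicators with $I_j$ replaced by $I_j^-$ and by $I_j^+$. Each sandwich bound decomposes as a disjoint sum over pairs $(a_1, a_2)$ for which both small intervals lie in the respective $I_j^\pm$. There are at most $m|I_j^\pm| = m|I_j| + O(1)$ choices on each axis, hence at most $m^2 |I_1||I_2| + O(m)$ such pairs. By Lemma \ref{almosteq}, each summand contributes $\frac{N}{m^2} + N \cdot O\!\left(\frac{1}{T_1} + \frac{T_1 \log T_1}{T_2} + \frac{T_2 \log T_1}{N}\right)$. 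Summing the main terms yields $N |I_1^\pm| |I_2^\pm| = N |I_1||I_2| + O(N/m)$, and dividing by $N$ produces the claimed $|I_1||I_2| + O(1/m)$ plus the pooled pairwise error term.

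The main obstacle is simply the error bookkeeping: one needs the boundary estimate $|I_i^+| - |I_i^-| = O(1/m)$ to hold for the class of $I_i$ in play (immediate for Jordan measurable sets, and reducible to this by outer regularity in general), and one needs the aggregate of the per-pair error from Lemma \ref{almosteq} over the $O(m^2)$ relevant pairs to match the error multiplier advertised in the statement. Beyond these accounting points, no new analytic input is required: the corollary is a routine inclusion-exclusion upgrade of the dyadic Lemma \ref{almosteq} to arbitrary measurable sets, with the parameter $m$ free to be optimized later as a function of $T_1, T_2, N$.
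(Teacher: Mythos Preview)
Your approach is exactly what the paper sketches: it says only that ``the result essentially follows from chopping up the interval from $[0,1]$ into pieces of size $\frac{1}{m}$; except for at most $O(1)$ many pieces, each piece is contained or disjoint from each $I_j$,'' and leaves the rest to the reader --- which is precisely your sandwich between $I_i^-$ and $I_i^+$ followed by termwise application of Lemma~\ref{almosteq}. Your caution about the error multiplier is well placed (na\"ive summation over $O(m^2)$ pairs yields $m^2\cdot O(\cdot)$ rather than the stated $m\cdot O(\cdot)$), but the paper explicitly remarks that only $o(1)$ in the regime $m\ll T_1\ll T_2\ll N$ matters, so this discrepancy --- and the implicit restriction to $I_j$ with $O(1)$ boundary atoms, which the paper's phrase ``at most $O(1)$ many pieces'' also presupposes --- is immaterial.
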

Again we remark that the error term could be optimized more and the important thing is that it is $o(1)$ in the regime $m \ll T_1 \ll T_2 \ll N$. We also remark that the optimal choice of $m$ for the error term above is the square root of the original error term. The result essentially follows from chopping up the interval from $[0,1]$ into pieces of size $\frac{1}{m}$. Except for at most $O(1)$ many pieces, each piece is contained or disjoint from each $I_j$. We leave the details to the interested reader.

Now we can finally return to the proof of Theorem \ref{mainthm}. Let $D_1, D_2, D_3, \ldots$ be an increasing sequence of natural numbers and $\e_1, \e_2, \e_3, \ldots$ be a decreasing sequence of positive real numbers approaching $0$. Let $J_k = \lceil \frac{2}{\e_k} \rceil + 1$. By Lemma \ref{ChooseT}, for all real numbers $T_0$, there exists $T_k \geq T_0$ such that for all $p \leq D_k$, 
\begin{equation}\label{defT}
\begin{cases}
	|p^{iT_k} - 1| \leq \e_k & \text{ whenever } p = 1 \mod 3 \\
	|p^{iT_k} + 1| \leq \e_k & \text{ whenever } p = -1 \mod 3.
\end{cases}
\end{equation}
We can also fix $t_k \geq T_k$ such that
\begin{equation}\label{deft}
| p^{it_k} - 1 | \leq \e_k
\end{equation}
for all $p \leq D_k$.

We will fix two sequences of real numbers $T_k$ and $t_k$ and a sequence of natural numbers $N_{k,0}, \ldots, N_{k, J_k}$ such that for each $k$, $T_k$ is sufficiently large depending on $\e_k$ and $D_k$, $t_k$ is sufficiently large depending on $T_k$, $D_k$ and $\e_k$ and $N_{k,j}$ is sufficiently large depending on $T_k$, $T_{k+1}$, $t_k$, $\e_k$ and $N_{r,s}$ for $r < k$ and any $s$ or for $r = k$ and any $s < j$. Thus
\[
D_k, \frac{1}{\e_k} \ll T_k \ll t_k \ll T_{k+1} \ll N_{k,0} \ll N_{k,1} \ll \cdots \ll N_{k,J_k} \ll N_{k+1,1}.
\]
For convenience define $N_k = N_{k, 0}$, $N_{k,J_k+1} = N_{k+1}$ and $N_0 = 1$. The precise nature of how big each variable needs to be depending on the last could be worked out by the interested reader but for instance
\begin{enumerate}
\item $\frac{1}{T_k} + \frac{T_k \log T_k}{t_{k}} + \frac{T_k \log T_{k+1}}{N_k}$ tends to $0$ as $k$ tends to infinity.  \label{densityhappens}
\item $N_{k,j}  D_k < N_{k,j+1}$ and $N_{k,J_k}  D_k< N_{k+1,0}$.  \label{jumpsize}
\item In particular, we may assume  $N_{k,j}  2 < N_{k,j+1}$ which is enough to imply 
\[
d(\{n \colon |n - N_{k,j}| < 3 \text{ for some } k, j \}) = 0.
\]
\label{doubling}
\end{enumerate}
Let $n$ be a natural number in the interval $[N_{k,j}, N_{k,j+1})$ or in the interval $[N_{k,j}, N_{k+1,0})$ if $j = J_k$ and suppose $n$ is not divisible by $3$. Let $\alpha(n) = (\e_k \cdot j)- 1$. Let 
\[
\chi(n) =\begin{cases}
	+1 & \text{ if } n = 1 \mod 3 \\
	-1 & \text{ if } n = -1 \mod 3 \\
	0 & \text{ if } n = 0 \mod 3.
\end{cases}
\] 
Then we say $3^\ell n$ is in $\S$ for all $\ell$ if and only if
\[
\begin{cases}
\chi(n) \cdot \sgn( \Re( n^{iT_k} ) ) > 0 & \text{ and } \Re( n^{it_k} ) \geq \alpha(n) \text{ or } \\
\chi(n) \cdot \sgn( \Re( n^{iT_{k+1}} ) ) > 0 &  \text{ and }  \Re( n^{it_k} ) < \alpha(n)
\end{cases}
\]

\begin{rem}
	Whether you use the condition $\chi(n) \cdot \sgn( \Re( n^{iT_k} ) ) > 0 $ or $\chi(n) \cdot \sgn( \Re( n^{iT_{k+1}} ) ) > 0 $ depends whether $ \Re( n^{it_k} ) \geq \alpha(n)$. 
	When $n$ is in $[N_{k,0}, N_{k,1})$ and $\alpha(n) = -1$, then the conditions that $\Re( n^{it_k} ) \geq \alpha(n)$ is trivially satisfied and when $\alpha(n) > 1$ it is necessarily false.  Thus, by slowly changing $\alpha$ we slowly ``turn on" and ``turn off" the restrictions that $\chi(n) \Re(n^{iT_{k}}) > 0$ and $\chi(n) \Re(n^{iT_{k+1}}) > 0$.
\end{rem}
Now we verify the necessary claims about $\S$.

\begin{claim}\label{Sisstable}
	$\S$ is stable.
\end{claim}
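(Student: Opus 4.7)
The plan is to verify the stability condition $d(\{n : \ch_{n \in \S} \ne \ch_{pn \in \S}\}) = 0$ for each prime $p$; by multiplicativity this suffices. The $p = 3$ case is immediate from the construction: whether $M = 3^\ell m$ (with $(m, 3) = 1$) lies in $\S$ depends only on $m$, so $M$ and $3M$ always agree and the bad set is empty.

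Fix a prime $p \neq 3$ and let $k_0$ be minimal with $p \leq D_{k_0}$; the contribution of $n$ whose $3$-free part is at most $N_{k_0, 0}$ is a bounded set and thus negligible for the density. For each $k \geq k_0$, Lemma \ref{ChooseT} gives $p^{iT_k} = \chi(p) + O(\e_k)$ and, since $D_k \leq D_{k+1}$, also $p^{iT_{k+1}} = \chi(p) + O(\e_{k+1})$, where $\chi$ is the nontrivial character mod $3$. Multiplying, $(pm)^{iT_k} = \chi(p)\, m^{iT_k} + O(\e_k)$ and similarly at $T_{k+1}$. For $m$ coprime to $3$ lying in an interval $[N_{k,j}, N_{k,j+1})$, condition (\ref{jumpsize}) forces $pm \in [N_{k,j}, N_{k,j+2})$, i.e.\ into the same or the adjacent interval.

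The two cases in the definition of $\S$ are interchanged by the involution that swaps residues $1 \leftrightarrow -1 \pmod{3}$ and sends $\Re(\cdot^{iT}) \mapsto -\Re(\cdot^{iT})$. Because multiplication by $p$ permutes residues mod $3$ by $\chi(p)$ and approximately multiplies $m^{iT_k}$ and $m^{iT_{k+1}}$ by $\chi(p)$, the condition for $pm \in \S$ matches the condition for $m \in \S$ up to two sources of error: the $O(\e_k)$ approximation in $p^{iT_k}, p^{iT_{k+1}}$, and an $O(\e_k)$ shift in the thresholds $\alpha, \beta$ when $pm$ falls in the neighbouring interval. Hence a bad $m$ must satisfy $\Re(m^{iT_k}) \in [\pm\alpha(m) - O(\e_k), \pm\alpha(m) + O(\e_k)]$ or $\Re(m^{iT_{k+1}}) \in [\pm\beta(m) - O(\e_k), \pm\beta(m) + O(\e_k)]$. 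Applying Corollary \ref{realcor} with $T_1 = T_k$, $T_2 = T_{k+1}$ and $N = N_{k, j+1}$, using the growth hierarchy $D_k, \e_k^{-1} \ll T_k \ll T_{k+1} \ll N_{k,j}$ to drive the error in Lemma \ref{almosteq} to $o(1)$, the density of bad $m$ inside $[N_{k,j}, N_{k,j+1})$ is $O(\e_k) + o(1)$.

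Summing over $j = 0, \ldots, 2J_k$ and incorporating the $3$-adic factor (a convergent geometric series) bounds the bad count at level $k$ by $O(\e_k) N_{k+1, 0}$. For $N \in [N_{k^*, 0}, N_{k^*+1, 0})$, the bad count up to $N$ is dominated by the level-$k^*$ contribution together with the much smaller contributions from earlier levels, giving $|B_p \cap [N]|/N = O(\e_{k^*}) \to 0$ as $N \to \infty$. The principal obstacle is that boundary crossings are not rare --- a fraction roughly $1 - 1/p$ of the $m$ in any given interval have $pm \notin [N_{k,j}, N_{k,j+1})$ --- so the argument lives or dies by the claim that the defining conditions at adjacent intervals differ by only $O(\e_k)$. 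That is precisely the reason for introducing the graduated thresholds $\alpha$ and $\beta$, whose values change by only $O(\e_k)$ when $j$ increments, so that boundary crossings contribute $O(\e_k)$ rather than $O(1)$ to the bad density.
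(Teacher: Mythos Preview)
Your approach matches the paper's, but there is one genuine gap. You assert that passing from $m$'s interval to $pm$'s adjacent interval shifts the thresholds $\alpha,\beta$ by at most $O(\e_k)$, and this drives the whole comparison. But when $m \in [N_{k,2J_k}, N_{k+1,0})$ and $pm \in [N_{k+1,0}, N_{k+1,1})$, the defining data change discontinuously: $m$ uses the pair $(T_k, T_{k+1})$ with $\alpha(m)=J_k\e_k>1$ and $\beta(m)=0$, while $pm$ uses $(T_{k+1}, T_{k+2})$ with $\alpha(pm)=0$ and $\beta(pm)=J_{k+1}>1$ --- certainly not an $O(\e_k)$ shift. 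The paper treats this boundary case separately, observing that $\alpha(m)>1$ makes the $T_k$-condition on $m$ vacuous and $\beta(pm)>1$ makes the $T_{k+2}$-condition on $pm$ vacuous, so \emph{both} membership conditions reduce to the single sign constraint on $\Re(\cdot^{iT_{k+1}})$, and the comparison goes through as before. Without this check the stability argument is incomplete; it is exactly why the construction arranges for $\alpha$ to exceed $1$ at the end of block $k$ and for $\beta$ to exceed $1$ at the start of block $k+1$.

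A smaller point: your $3$-adic bookkeeping (``a convergent geometric series \dots\ much smaller contributions from earlier levels'') is too quick. A bad $n = 3^\ell m \le N$ can have its $3$-free part $m$ at an arbitrarily early level $k'$, where the available bound is only $O(\e_{k'})$ with $\e_{k'} \ge \e_{k^*}$, and those contributions are not automatically dominated by the current block. The paper resolves this with a truncated Ces\`aro decomposition
\[
E(1,N) \le \sum_{j=1}^{\ell} j\,E_0\!\left(\frac{N}{3^j},\frac{N}{3^{j-1}}\right) + O\!\left(\frac{N}{3^\ell}\right)
\]
and a double limit: first $N\to\infty$ with $\ell$ fixed (so every $E_0$ term lives at arbitrarily high level and is $o(N/3^{j-1})$), giving $\overline{d}(B_p) \lesssim 3^{-\ell}$, and then $\ell\to\infty$. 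Your geometric series is the right object, but the order of limits needs to be made explicit.
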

Let $p$ be a prime. If $p = 3$ then by examining the definition of $\S$, $n$ is in $\S$ if and only if $3n$ is in $\S$ so assume $p \neq 3$. Let $k$ be such that $p \leq D_k$ which exists since $D_k$ tends to infinity by assumption. Consider the case of some $n$ in $[N_{k,j}, N_{k,j+1})$ which is not divisible by $3$. Then if $j < J_k$, by \eqref{jumpsize} either $pn$ is in $[N_{k,j}, N_{k,j+1})$ or in $[N_{k,j+1}, N_{k,j+2})$. Observe that, by \eqref{defT}, we have
\begin{align*}
	\chi(pn) \Re((pn)^{iT_k})  = & \Re(n^{iT_k} p^{iT_k} \chi(p)) \\
	= & \Re(n^{iT_k}(1 + O(\e_k) )) \\
	= & \Re(n^{iT_k}) + O(\e_k)
\end{align*}
and similarly
\begin{align*}
	\chi(pn) \Re((pn)^{iT_{k+1}})  = & \Re(n^{iT_{k+1}} p^{iT_{k+1}} \chi(p) )\\
	= & \Re(n^{iT_{k+1}} ) + O(\e_k)
\end{align*}
and also because $\alpha(pn) = \alpha(n) + O(\e_k)$ and \eqref{deft} we have
\begin{align*}
	 \Re((pn)^{it_k})  - \alpha(pn) = & \Re(n^{it_k} p^{it_k}) - \alpha(n)  + O(\e_k) \\
	= & \Re(n^{it_k})(1 + O(\e_k) ) - \alpha(n)  + O(\e_k).
\end{align*}
Therefore, each of these expressions can only change signs if one of $n^{iT_k}$, $n^{iT_{k+1}}$, or $n^{it_k}$ is in one of two small intervals of length $O(\e_k)$.
By Lemma \ref{almosteq} we can estimate the probability that any of these happen. In particular by \eqref{densityhappens} that probability goes to $0$. 

The other important case to consider is what happens if $n$ is in $[N_{k,2J_k}, N_{k+1,0})$ and $pn$ is in $[N_{k+1,0}, N_{k+1,1})$. In this case, notice that the $\Re(n^{it_k}) < \alpha_k(n)$ for $n$ and the $\Re((pn)^{it_k}) \geq \alpha_{k+1}(pn)$  actually play no role because each is of the form $\Re(z) \geq -1$ or $\Re(z) \leq 1$ which are trivially true. 
Thus as before $n$ is in $\S$ if and only if $pn$ is in $\S$ unless $\Re(n^{iT_{k+1}}) = O(\e_k)$ which is rare. 

So far, we only handled the case of $n$ not divisible by $3$. Let's see how this allows us to estimate the number of $n$ such that $\ch_{n \in \S} \ne \ch_{pn \in \S}$ in general. Let $\ell$ be a natural number. Let $E(A,B)$ denote the number of $n$ between $A$ and $B$ such that $\ch_{n \in \S} \ne \ch_{pn \in \S}$ and let $E_0(A,B)$ be the number of such $n$ which are not divisible by $3$. Then we can write $E(1,N)$ as a kind of Ces\`aro average:
\begin{equation}\label{cesaro}
E(1,N) \leq E_0\left(\frac{N}{3}, N\right) + 2 E_0\left(\frac{N}{9}, \frac{N}{3}\right) + \cdots + \ell E_0\left(\frac{N}{3^\ell}, \frac{N}{3^{\ell-1}}\right) + O\left(\frac{N}{3^\ell}\right)
\end{equation}
since each $n \geq N 3^{-j}$ can show up $j$ times until $3^j n \geq N$ i.e. we have the expression $\ch_{3^h n \in \S} \ne \ch_{p3^h n \in \S}$ for each $h \leq j$. The remaining $O\left(\frac{N}{3^\ell}\right)$ accounts for all numbers divisible by $3^\ell$ and therefore not included in our count and also for all the numbers between $1$ and $\frac{N}{3^\ell}$. We conclude that if the frequency of $\ch_{n \in \S} \ne \ch_{p n \in \S}$ in $[3^{-j}N, 3^{-j+1}N)$ for $j \leq \ell$ and for $n$ not divisible by $3$ is at most $\e_k$ then at a minimum
\[
E(1,N) \leq \ell \e_k N + O\left(\frac{N}{3^\ell}\right).
\]
(Of course we can get a substantially better error term but this is all that is need for our purposes). Since this holds for all $k$ for $N$ sufficiently large and $\ell$ fixed, we conclude that 
\[
\overline{d}(\{n \colon \ch_{n \in \S} \ne \ch_{p n \in \S}\}) \lesssim 3^{-\ell}
\]
for all $\ell$. Since this holds for all $\ell$ we conclude 
\[
d(\{n \colon \ch_{n \in \S} \ne \ch_{p n \in \S}\}) = 0.
\]
Since $p$ was arbitrary, $\S$ is stable.

\begin{claim}\label{intersectionisempty}
	\[
	d(\S \cap (\S+1) \cap (\S+2)) = 0.
	\]
\end{claim}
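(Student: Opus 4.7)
The plan is to show that the joint conditions on the two non-$3$-divisible members of $\{n, n+1, n+2\}$ already force a density-zero event, so the condition on the remaining $3$-divisible member is not even needed. If $n \in \S \cap (\S+1) \cap (\S+2)$, each of $n, n+1, n+2$ lies in $\S$; exactly one of them is divisible by $3$, and I denote the other two by $a$ and $b$, so that $a \equiv 1 \pmod 3$, $b \equiv -1 \pmod 3$, and $|a - b| \leq 2$.

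Fix $k, j$ and restrict to $n \in [N_{k,j}, N_{k,j+1})$. Since $N_{k,j+1} \gg N_{k,j}$, the set of $n$ for which $a$ and $b$ fall into different sub-intervals is bounded in cardinality per sub-interval and contributes density zero. For the remaining $n$, the parameters $\alpha(a) = \alpha(b)$ and $\beta(a) = \beta(b)$ agree; denote their common values by $\alpha, \beta$. The conditions $a, b \in \S$ read
\[
\Re(a^{iT_k}) > -\alpha, \quad \Re(a^{iT_{k+1}}) > -\beta, \quad \Re(b^{iT_k}) < \alpha, \quad \Re(b^{iT_{k+1}}) < \beta.
\]
By construction, at least one of $\alpha, \beta$ vanishes throughout this sub-interval. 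Assume $\alpha = 0$ (the case $\beta = 0$ is symmetric, using $T_{k+1} \ll N_{k,0} \leq N_{k,j}$). Then $\Re(a^{iT_k}) > 0$ and $\Re(b^{iT_k}) < 0$; since $|b-a| \leq 2$ and $a \geq N_k$, one computes $b^{iT_k} = a^{iT_k} + O(T_k / N_k)$, so the two inequalities together force
\[
\Re(a^{iT_k}) \in \bigl(0,\ O(T_k/N_k)\bigr),
\]
confining $a^{iT_k}$ to an arc of $S^1$ of length $O(T_k/N_k)$.

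By Corollary \ref{realcor} applied with a trivial second interval $I_2 = [0,1]$ (or by partitioning the arc into pieces of the form $e([a_1/m, (a_1+1)/m))$ and summing Lemma \ref{almosteq}), the density of $a \leq N_{k,j+1}$ with $a^{iT_k}$ in such an arc is $O(T_k/N_k)$ plus the lemma error, which tends to $0$ as $k \to \infty$ by our choice of parameters. Thus for every $\e > 0$ and all sufficiently large $k$, the density of $\S \cap (\S+1) \cap (\S+2)$ within each sub-interval $[N_{k,j}, N_{k,j+1})$ is at most $\e$ uniformly in $j$; since these sub-intervals partition $\N$ (up to finite initial data), $\overline{d}(\S \cap (\S+1) \cap (\S+2)) \leq \e$, and taking $\e \to 0$ gives the claim.

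The main obstacle is not conceptual but bookkeeping: verifying that the boundary-crossing $n$ and the error terms from Lemma \ref{almosteq} aggregate to $o(1)$ uniformly across the countable partition of $\N$ into sub-intervals. The heart of the argument is that for $a, b$ close in size, $\Re(a^{iT_k})$ and $\Re(b^{iT_k})$ are essentially equal, so requiring them to have opposite signs — as the mod-$3$ residue structure of $\S$ demands — is a measure-zero constraint in the limit.
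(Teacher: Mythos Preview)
Your argument is correct and is essentially the paper's own proof: both reduce to the observation that, on a sub-interval where (say) $\alpha=0$, the $1\bmod 3$ member must have $\Re(\cdot^{iT_k})>0$ and the $-1\bmod 3$ member must have $\Re(\cdot^{iT_k})<0$, which by the Taylor estimate $(n+h)^{iT}=n^{iT}(1+O(Th/n))$ forces $n^{iT_k}$ into an arc of length $O(T_k/N_k)$, a density-zero event by Lemma~\ref{almosteq}. The only cosmetic differences are that the paper phrases it via the trichotomy ``all three real parts positive / all negative / near the imaginary axis'' and chooses to illustrate with $\beta=0$ (hence $T_{k+1}$) rather than $\alpha=0$.
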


We also note that by Taylor expansion for all real numbers $T$ and natural numbers $h$ and $n$ such that $T h \ll n$
\begin{align*}
	(n+h)^{iT} = & n^{iT} \exp\left(iT \log\left(1+\frac{h}{n}\right)\right) \\
	= & n^{iT} \cdot \left( 1 + O\left(T \log\left(1+\frac{h}{n}\right)\right) \right)  \\
	= & n^{iT} \cdot \left( 1 + O\left( \frac{Th}{n} \right) \right).
\end{align*}
Thus, for all $n \gg T_{k+1}$, at least one of the following three things happens
\begin{align*}
\Re(n^{iT_{k+1}}) > \alpha(n), \Re((n+1)^{iT_{k+1}}) > \alpha(n), & \text{ and } \Re((n+2)^{iT_{k+1}}) > \alpha(n), 
\intertext{ or }
\Re(n^{iT_{k+1}}) < \alpha(n), \Re((n+1)^{iT_{k+1}}) < \alpha(n), & \text{ and } \Re((n+2)^{iT_{k+1}}) < \alpha(n),
\intertext{ or }
\Re(n^{iT_{k+1}}) = \alpha(n) + O\left( \frac{T_{k+1}h}{n} \right).
\end{align*}
The last condition happens with probability $O\left( \frac{1}{t_{k}} + \frac{t_{k} \log t_{k}}{N_{k,j}} \right)$ which in particular tends to $0$ as $k$ tends to infinity. Similarly, for all $n$ outside of a set of density tending to $0$,
\begin{equation*}\label{resgn1}
\sgn\left(\Re (n^{iT_{k}})\right) = \sgn\left(\Re((n+1)^{iT_{k}}))\right) = \sgn\left(\Re((n+2)^{iT_{k}})\right)
\end{equation*}
and 
\begin{equation*}\label{resgn2}
\sgn\left(\Re(n^{iT_{k+1}})\right) = \sgn\left(\Re((n+1)^{iT_{k+1}})\right) = \sgn\left(\Re((n+2)^{iT_{k+1}})\right)
\end{equation*}
 However, if, for instance, both
 \[
 \Re((n+j)^{it_k}) > \alpha(n) \text{ and } \Re((n+j)^{iT_k}) > 0
 \]
 for each of $j = 0,1$ and $2$, then one of those three terms will have $n = -1 \mod 3$ and therefore that $n$ will not be in $\S$.
 Similarly, if, 
 \[
 \Re((n+j)^{it_k}) > \alpha(n) \text{ and } \Re((n+j)^{iT_k}) < 0
 \]
 for each of $j = 0,1$ and $2$, then one of those three terms will have $n = 1 \mod 3$ and therefore that $n$ will not be in $\S$. Likewise in the other two cases.

\begin{claim}\label{densityquarter}
	\[
	d(\S) = \frac{1}{2}.
	\]
\end{claim}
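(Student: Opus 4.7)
The plan is to exploit the $3$-adic self-similarity of $\S$: every $n \in \N$ has a unique representation $n = 3^\ell m$ with $\gcd(m,3)=1$, and by construction $n \in \S$ if and only if $m$ lies in the set
\[
A = \{\, m \in \N : \gcd(m,3)=1 \text{ and } m \text{ satisfies the oscillation condition in its interval}\,\}.
\]
Since the dilates $3^\ell A$ are pairwise disjoint, I would write
\[
|\S \cap [N]| = \sum_{\ell=0}^{\lfloor \log_3 N \rfloor} |A \cap [N/3^\ell]|
\]
and reduce Claim \ref{densityquarter} to showing $\underline{d}(A) \geq 1/6$. Combined with the geometric series $\sum_{\ell \geq 0} 3^{-\ell} = 3/2$ via a standard truncation argument (fix $L$, use $|A \cap [M]|/M \geq 1/6 - \e$ on each term with $\ell \leq L$ for $N$ large, and bound the tail $\sum_{\ell > L}$ trivially), this yields $\underline{d}(\S) \geq (1/6)(3/2) = 1/4$.

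For the lower bound on the density of $A$, I would analyze each interval $[N_{k,j}, N_{k,j+1})$ separately. When either $\alpha \geq 1$ or $\beta \geq 1$ on the interval, the corresponding oscillation inequality is vacuous, so only the other one matters and the within-interval density of $A$ is $\tfrac{1}{3} + o(1)$ (one factor of $\tfrac{1}{3}$ per residue class mod $3$, times a factor of $\tfrac{1}{2}$ from the surviving half-plane condition, summed over the two residue classes). The genuinely constraining case is the central interval $j = J_k$, where $\alpha = \beta = 0$ and both inequalities are strictly imposed. Here I would invoke Lemma \ref{almosteq} with $m = 2$, $T_1 = T_k$, $T_2 = T_{k+1}$ in the regime $T_k \ll T_{k+1} \ll N_{k,J_k}$ forced by the parameter choice: the pair $(m^{iT_k}, m^{iT_{k+1}})$ equidistributes on $S^1 \times S^1$, so the probability that both real parts simultaneously have the prescribed sign is $\tfrac{1}{4} + o(1)$. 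Combined with the residue factor, the within-interval density of $A$ is $\tfrac{1}{3}\cdot\tfrac{1}{4} + \tfrac{1}{3}\cdot\tfrac{1}{4} = \tfrac{1}{6} + o(1)$, while the remaining intervals yield densities bounded below by $\tfrac{1}{6}$ too (with equality only at $j = J_k$). Because $N_{k,J_k+1}$ is chosen vastly larger than $N_{k,J_k}$, the central interval dominates $|A \cap [M]|$ for $M \in [N_{k,J_k}, N_{k,J_k+1}]$, and the running density $|A \cap [M]|/M$ dips to exactly $\tfrac{1}{6} + o(1)$ at $M = N_{k,J_k+1}$. Hence $\underline{d}(A) = 1/6$.

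The main technical point is justifying the asymptotic independence of the residue class of $m$ modulo $3$ from the oscillation conditions, which converts the unrestricted equidistribution of Lemma \ref{almosteq} into one restricted to arithmetic progressions. One way is to expand $\ch_{m \equiv a \mod 3}$ in Dirichlet characters mod $3$: the principal character piece reduces directly to Lemma \ref{almosteq} restricted to $\gcd(m,3)=1$, while the nontrivial character piece is a twisted exponential sum of the same type as those in the proof of Lemma \ref{almosteq} and contributes $o(M)$. Alternatively, substituting $m = 3q + a$ and applying Lemma \ref{almosteq} directly to $q$ transfers the oscillation inequalities via $(3q+a)^{iT} = (3q)^{iT}(1 + O(T/q))$. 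Granting this independence, the remaining bookkeeping is routine.
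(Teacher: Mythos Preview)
Your proposal is correct and follows essentially the same route as the paper: reduce via the $3$-adic decomposition $\S = \bigcup_\ell 3^\ell A$ to showing $\underline{d}(A) \geq 1/6$, establish this by Lemma~\ref{almosteq}, and then sum the geometric series (the paper's Ces\`aro device \eqref{cesaro} plays the role of your truncation argument). The only cosmetic difference is that the paper bounds $A$ from below by the single uniformly defined set with $\alpha=\beta=0$ on all of $[N_k,N_{k+1})$ and invokes Lemma~\ref{almosteq} once, whereas you work interval by interval and identify $j=J_k$ as the worst case; your discussion of why the residue class mod $3$ is asymptotically independent of the oscillation conditions is in fact more careful than the paper, which passes over this point in silence.
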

For each for all $n$ between $N_k$ and $N_{k+1}$ not divisible by $3$, $n$ is in $\S$ if  
\[
\Re(\chi(n) n^{it_k}) \geq 0 \text{ and } \Re(n^{iT_k}) > 0
\]
By Lemma \ref{almosteq}, this happens with probability $\frac{1}{4} + O\left(\frac{1}{T_k} + \frac{T_k \log T_k}{t_k} + \frac{t_k \log t_{k}}{N_k}\right)$. Alternately, $n$ is in $\S$ if  
\[
\Re(\chi(n) n^{it_k}) > 0 \text{ and } \Re(n^{iT_{k+1}}) > 0
\]
which again by Lemma \ref{almosteq} happens with probability $\frac{1}{4} + O\left(\frac{1}{t_k} + \frac{t_k \log t_k}{T_{k+1}} + \frac{T_{k+1} \log t_{k}}{N_k}\right)$. The error term in both cases goes to zero by \eqref{densityhappens}. Again, summing the contributions from powers of $3$ as in \eqref{cesaro} yields the claim.

Now we can conclude the proof of Theorem \ref{mainthm} modulo the proof of Lemma \ref{almosteq}. By Claims \ref{Sisstable}, \eqref{intersectionisempty} and \eqref{densityquarter}, $\S$ is a stable set where $\S \cap (\S+1) \cap (\S+2)$ has density $0$ and such that $d(\S) = \frac{1}{2}$. Thus $\S$ satisfies the hypothesis of Theorem \ref{mainthm}.

\section{Proof of Lemma \ref{almosteq}} \label{lemmaproof}
This section is devoted to the proof of the following lemma:
\almosteq*
	We prove the lemma in three steps.
\begin{claim}\label{claim1}
	The map defined by $[0,1] \ni x \mapsto (Nx)^{iT_1}$ is almost equidistributed in the sense that
	\[
	\left| \left\{ x \in [0,1] \colon (Nx)^{iT_1} \in e\left(\left[\frac{a_1}{m}, \frac{a_1+1}{m}\right)\right) \right\} \right| = \frac{1}{m} + O\left(\frac{1}{T_1}\right)
	\]
\end{claim}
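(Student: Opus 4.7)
The plan is to identify the set in question explicitly as a disjoint union of intervals on $(0,1]$ and then compute its Lebesgue measure by summing a geometric series, using a first-order Taylor expansion to isolate the main term $1/m$.

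First, I would unwrap the definition: writing $(Nx)^{iT_1} = e(\tfrac{T_1}{2\pi}\log(Nx))$, the condition becomes $\tfrac{T_1}{2\pi}\log(Nx) \bmod 1 \in [a_1/m, (a_1+1)/m)$. Since the map $x \mapsto \tfrac{T_1}{2\pi}\log(Nx)$ is smooth and strictly increasing on $(0,1]$ with range $(-\infty, \tfrac{T_1}{2\pi}\log N]$, solving $\tfrac{T_1}{2\pi}\log(Nx) = k + a_1/m$ and $\tfrac{T_1}{2\pi}\log(Nx) = k + (a_1+1)/m$ for each $k \in \Z$ shows the set in question equals
\[
\bigcup_{k \in \Z} [x_k, y_k) \cap (0,1], \qquad x_k = \frac{e^{2\pi(k + a_1/m)/T_1}}{N}, \quad y_k = \frac{e^{2\pi(k + (a_1+1)/m)/T_1}}{N}.
\]

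Next, I would sum up the lengths. The lengths form a geometric progression: $y_k - x_k = q^k \cdot C/N$ where $q = e^{2\pi/T_1}$ and $C = e^{2\pi a_1/(mT_1)}(e^{2\pi/(mT_1)} - 1)$. Letting $k_{\max} = \lfloor \tfrac{T_1}{2\pi}\log N - (a_1+1)/m \rfloor$, the intervals with $k \leq k_{\max}$ lie entirely within $(0,1]$, while the at most one partial interval straddling $x = 1$ contributes length at most $e^{2\pi/(mT_1)} - 1 = O(1/T_1)$, which goes into the error term. Summing the convergent geometric tail from $-\infty$ to $k_{\max}$ yields
\[
\text{measure} = \frac{C}{N(q - 1)} \cdot q^{k_{\max} + 1} + O(1/T_1).
\]

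Finally, I would Taylor expand each factor. Using $e^a - 1 = a + O(a^2)$, the ratio $(e^{2\pi/(mT_1)} - 1)/(e^{2\pi/T_1} - 1) = 1/m + O(1/T_1)$; since $a_1 < m$, the factor $e^{2\pi a_1/(mT_1)} = 1 + O(1/T_1)$; and by the definition of $k_{\max}$, the quantity $q^{k_{\max}+1}/N$ lies in the interval $(e^{-2\pi(a_1+1)/(mT_1)}, e^{2\pi(1-(a_1+1)/m)/T_1}]$, whose endpoints are both $1 + O(1/T_1)$. Multiplying these three factors out yields $1/m + O(1/T_1)$, proving the claim. The computation is essentially routine; the only mild obstacle is the bookkeeping near the boundary $x = 1$ and verifying that the $O$-constants are absolute (independent of $a_1, m, N$), which follows because all exponents appearing are bounded by $2\pi/T_1 \leq \pi$ once $T_1 > 2$, placing them in a compact region where the Taylor remainders admit uniform estimates.
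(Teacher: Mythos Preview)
Your proof is correct. Both arguments begin by identifying the preimage as a union of intervals indexed by integers $k$ (your $[x_k,y_k)$, the paper's $\ell$-indexed sets), but the methods then diverge. You compute the measure directly: the interval lengths form a geometric progression with ratio $q=e^{2\pi/T_1}$, you sum the series explicitly, and then Taylor-expand the closed form to extract $1/m$. The paper instead uses a symmetry argument: for each $b$ in $\{0,\dots,m-1\}$ the dilation $x\mapsto e^{2\pi(a_1-b)/(mT_1)}x$ carries the preimage for the arc $[\tfrac{b}{m},\tfrac{b+1}{m})$ to the preimage for $[\tfrac{a_1}{m},\tfrac{a_1+1}{m})$ and scales lengths by $1+O(1/T_1)$; since the $m$ preimages partition $[0,1]$ up to a boundary set of size $O(1/T_1)$, each must have measure $1/m+O(1/T_1)$. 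Your direct computation is more elementary and self-contained, while the paper's comparison argument is a bit slicker and, crucially, iterates cleanly to the two-frequency case (Claim~\ref{claim2}): one simply repeats the same dilation trick on each of the $O(T_1\log T_1)$ sub-intervals from the first step. Adapting your geometric-series approach to Claim~\ref{claim2} would require a double sum and somewhat more bookkeeping.
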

\begin{proof}[Proof of Claim \ref{claim1}]
	Note that for any integer $b$ that $(Nx)^{iT_1}$ is in $e\left(\left[\frac{b}{m}, \frac{b+1}{m}\right)\right)$ if and only if
	\[
	\log x \in \frac{1}{T_1} \left( \left[\frac{b}{m}, \frac{b+1}{m}\right) + \ell \right) - \log N
	\]
	for some integer $\ell$. We can partition the unit interval into pieces such that $(Nx)^{iT_1}$ lies in $e\left(\left[\frac{b}{m}, \frac{b+1}{m}\right)\right)$ as follows:
	\[
	[0,1] = \bigcup_{0 \leq b < m} \bigcup_{\ell} \left\{ x \colon 	\log x \in \frac{1}{T_1} \left( \left[\frac{b}{m}, \frac{b+1}{m}\right) + \ell \right) - \log N \right\} \cap [0,1].
	\]
	For each $0 \leq b < m$, the map which multiplies every point by $e^\frac{2\pi(a-b)}{mT_1}$ is a bijection from the set 
	\begin{align*}& \left\{  x \colon 	\log x \in \frac{1}{T_1} \left( \left[\frac{b}{m}, \frac{b+1}{m}\right) + \ell \right) - \log N \right\}
		\intertext{ to the set
		}
		& \left\{  x \colon 	\log x \in \frac{1}{T_1} \left( \left[\frac{a_1}{m}, \frac{a_1+1}{m}\right) + \ell \right) - \log N \right\}
		\intertext{ and multiplies lengths by $e^\frac{2\pi(a-b)}{mT_1} = 1 + O\left( \frac{1}{T_1} \right)$ by the Taylor expansion for $e^x$. This almost gives a bijection on the sets in our partition } 
		\bigcup_{\ell} & \left\{  x \colon 	\log x \in \frac{1}{T_1} \left( \left[\frac{b}{m}, \frac{b+1}{m}\right) + \ell \right) - \log N \right\} \cap [0,1]
	\end{align*}
	except that we have intersected with the interval $[0,1]$. However, this only effects those $x$ which are in the interval $[e^\frac{-1}{T_1},1]$ which again has length $1 + O\left(\frac{1}{T_1}\right)$. Thus
	\begin{align*}
		\left| \bigcup_{0 \leq b < m} \bigcup_{\ell}  \left\{ x \colon 	\log x \in \frac{1}{T_1} \left( \left[\frac{b}{m}, \frac{b+1}{m}\right) + \ell \right) - \log N \right\} \cap [0,1] \right| &= \\
		m \left( \left| \bigcup_{\ell}  \left\{  x \colon 	\log x \in \frac{1}{T_1} \left( \left[\frac{a_1}{m}, \frac{a_1+1}{m}\right) + \ell \right) - \log N \right\} \cap [0,1] \right| +O\left( \frac{1}{T_1} \right) \right) &=\\
		1 + O\left( \frac{1}{T_1} \right) &.
	\end{align*} 
	Dividing by $m$ completes the proof of Claim \ref{claim1}.
\end{proof}
\begin{claim}\label{claim2}
	The map defined by $[0,1] \ni x \mapsto ((Nx)^{iT_1}, (Nx)^{iT_2})$ is almost equidistributed in the sense that
	\begin{align*}
		& \left\{ x \in [0,1] \colon (Nx)^{iT_1} \in e\left(\left[\frac{a_1}{m}, \frac{a_1+1}{m}\right)\right) \text{ and } (Nx)^{iT_2} \in e\left(\left[\frac{a_2}{m}, \frac{a_2+1}{m}\right)\right) \right\} \\ & =  \frac{1}{m^2} + O\left(\frac{1}{T_1} + \frac{T_1 \log T_1}{T_2}\right)
	\end{align*}
\end{claim}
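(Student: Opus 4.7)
The plan is to condition on the value of $(Nx)^{iT_1}$, use (the proof of) Claim~\ref{claim1} to describe the $T_1$-preimage as a union of short intervals in $\log x$-coordinates, and on each such interval apply a one-dimensional equidistribution estimate for the linear flow $u \mapsto e^{iT_2 u}$. For $j=1,2$ write $A_j$ for the set appearing in the $T_j$-condition; the goal is to estimate $|A_1 \cap A_2|$.

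From the proof of Claim~\ref{claim1}, in $\log x$-coordinates the set $A_1$ decomposes as a disjoint union $\bigsqcup_\ell J_\ell$ of intervals $J_\ell = [u_\ell, u_\ell + L_1]$ of length $L_1 \asymp \frac{1}{mT_1}$, with consecutive starting points $u_\ell$ spaced by $\asymp \frac{1}{T_1}$ and all lying in $(-\infty, 0]$. Claim~\ref{claim1} gives $|A_1| = \frac{1}{m} + O\bigl(\frac{1}{T_1}\bigr)$; summing the geometric series yields $\sum_\ell e^{u_\ell} = O(T_1)$.

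On each $J_\ell$ the map $u \mapsto (Ne^u)^{iT_2} = e^{iT_2 \log N} e^{iT_2 u}$ is a rotation of the linear flow $u \mapsto e^{iT_2 u}$, so a direct count shows that the Lebesgue measure of those $u \in J_\ell$ for which this map lands in the target arc of length $\frac{1}{m}$ equals $\frac{L_1}{m} + O\bigl(\frac{1}{T_2}\bigr)$, the error accounting for at most one partial $T_2$-period at each endpoint. Converting to $x$-measure via $e^u = e^{u_\ell}(1 + O(L_1))$ on $J_\ell$ and summing over $\ell$, the main term contributes $\frac{1}{m}|A_1| = \frac{1}{m^2} + O\bigl(\frac{1}{mT_1}\bigr)$, the per-interval error $O\bigl(\frac{1}{T_2}\bigr)$ weighted by $e^{u_\ell}$ contributes $O\bigl(\frac{T_1}{T_2}\bigr)$, and the $(1 + O(L_1))$ correction is absorbed, giving
\[
|A_1 \cap A_2| = \frac{1}{m^2} + O\left(\frac{1}{T_1} + \frac{T_1}{T_2}\right),
\]
which is at least as strong as the stated bound.

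The main obstacle is the linear-flow equidistribution on intervals $J_\ell$ whose length $L_1$ may be comparable to a single $T_2$-period: the error $O\bigl(\frac{1}{T_2 L_1}\bigr) = O\bigl(\frac{mT_1}{T_2}\bigr)$ is only useful in the intended regime $T_2 \gg mT_1$. Boundary-truncated intervals near $u = 0$ contribute at most $O\bigl(\frac{1}{T_1}\bigr)$, and the tail as $u_\ell \to -\infty$ is absorbed in the convergent geometric sum $\sum_\ell e^{u_\ell} = O(T_1)$. The naive Claim~\ref{claim1}-style trick of shifting the index by a single multiplicative scaling of $x$ fails here, because one cannot simultaneously shift the $T_1$- and $T_2$-indices by independent amounts, which is why we must pass through the interval decomposition instead.
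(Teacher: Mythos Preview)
Your argument is correct and follows essentially the same route as the paper: decompose the $T_1$-preimage into short intervals in $\log x$-coordinates and run a one-dimensional $T_2$-equidistribution on each, then sum the per-interval boundary errors. The only difference is bookkeeping: the paper truncates to $x\in[\tfrac{1}{T_1},1]$ and bounds the number of $T_1$-intervals by $O(T_1\log T_1)$, whereas you keep all intervals and weight the $O(1/T_2)$ error by $e^{u_\ell}$, summing the geometric series to $O(T_1)$; this is why you legitimately obtain the slightly sharper error $O\bigl(\tfrac{1}{T_1}+\tfrac{T_1}{T_2}\bigr)$.
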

\begin{proof}[Proof of Claim \ref{claim2}]
	We basically emulate the proof of Claim \ref{claim1} on each interval of the form
	\[
	\left\{ x \colon 	\log x \in \frac{1}{T_1} \left( \left[\frac{a_1}{m}, \frac{a_1+1}{m}\right) + \ell \right) - \log N \right\}.
	\]
	We split each such interval up into pieces
	\begin{align*}
		\bigcup_{0 \leq b < m} \bigcup_{\ell} & \left\{ x \colon 	\log x \in \frac{1}{T_2} \left( \left[\frac{b}{m}, \frac{b+1}{m}\right) + \ell \right) - \log N \right\} \\ \cap \bigcup_{\ell'} &\left\{ x \colon 	\log x \in \frac{1}{T_1} \left( \left[\frac{a_1}{m}, \frac{a_1+1}{m}\right) + \ell' \right) - \log N \right\}.
	\end{align*}
	By the previous argument, each piece of our partition corresponding to some $b$ is the same size up to an error of size $\frac{1}{T_2}$. However now in estimating the full expression
	\[
	\left\{ x \in [0,1] \colon (Nx)^{iT_1} \in e\left(\left[\frac{a_1}{m}, \frac{a_1+1}{m}\right)\right) \text{ and } (Nx)^{iT_2} \in e\left(\left[\frac{a_2}{m}, \frac{a_2+1}{m}\right)\right) \right\}
	\]
	we have one such error for each interval of the form
	\[
	\left\{ x \colon 	\log x \in \frac{1}{T_1} \left( \left[\frac{a_1}{m}, \frac{a_1+1}{m}\right) + \ell' \right) - \log N \right\}.
	\]
	Accepting an error of size $O\left(\frac{1}{T_1}\right)$, we can restrict ourselves to intervals in $\left[\frac{1}{T_1}, 1\right]$. Since we get another interval each time we multiply by $e^\frac{2\pi}{T_1}$ this is precisely $\log_{e^\frac{2\pi}{T_1}} T_1 = \frac{T_1}{2\pi} \log T_1$. Altogether we find
	\begin{align*}
		\Bigg| & \bigcup_{0 \leq b < m}  \bigcup_{\ell}  \left\{ x \colon 	\log x \in \frac{1}{T_2} \left( \left[\frac{b}{m}, \frac{b+1}{m}\right) + \ell \right) - \log N \right\} \\ \cap &\bigcup_{\ell'} \left\{ x \colon 	\log x \in \frac{1}{T_1} \left( \left[\frac{a_1}{m}, \frac{a_1+1}{m}\right) + \ell' \right) - \log N \right\} \cap \left[\frac{1}{T_1},1\right] \Bigg| \\ 
		= m  \Bigg| & \bigcup_{\ell}  \left\{ x \colon 	\log x \in \frac{1}{T_2} \left( \left[\frac{a_2}{m}, \frac{a_2+1}{m}\right) + \ell \right) - \log N \right\} \\ \cap &\bigcup_{\ell'} \left\{ x \colon 	\log x \in \frac{1}{T_1} \left( \left[\frac{a_1}{m}, \frac{a_1+1}{m}\right) + \ell' \right) - \log N \right\} \cap \left[\frac{1}{T_1},1\right] \Bigg|+ O\left( \frac{m T_1 \log T_1}{T_2} \right) \\
	\end{align*}
	which gives Claim \ref{claim2} after dividing by $m$ and applying Claim \ref{claim1}.
\end{proof}
Finally we complete the proof of Lemma \ref{almosteq}.
\begin{proof}
	Recall that for any real numbers $x < y$ the interval $[x,y)$ contains $y-x + \text{Error}$ many integers where $\text{Error} < 1$. By Claim \ref{claim2}, 
	\begin{align*}
		& \left\{ x \in [0,1] \colon (Nx)^{iT_1} \in e\left(\left[\frac{a_1}{m}, \frac{a_1+1}{m}\right)\right) \text{ and } (Nx)^{iT_2} \in e\left(\left[\frac{a_2}{m}, \frac{a_2+1}{m}\right)\right) \right\} \\ & =  \frac{1}{m^2} + O\left(\frac{1}{T_1} + \frac{T_1 \log T_1}{T_2}\right)
	\end{align*}
	Note that we are trying to estimate
	\begin{equation}\label{tempeq}
		\frac{|\{ n \leq N \colon n^{iT_1} \in e([\frac{a_1}{m}, \frac{a_1+1}{m})) \text{ and } n^{iT_2} \in e([\frac{a_2}{m}, \frac{a_2+1}{m})) \}|}{N} 
	\end{equation}
	i.e. which fraction of natural numbers $n$ for which $\frac{n}{N}$ is one of those values $x$ such that $x \mapsto ((Nx)^{iT_1}, (Nx)^{iT_2})$  lands in the set $e\left(\left[\frac{a_1}{m}, \frac{a_1+1}{m}\right)\right) \text{ and } (Nx)^{iT_2} \in e\left(\left[\frac{a_2}{m}, \frac{a_2+1}{m}\right)\right)$. Claim \ref{claim2} shows that \eqref{tempeq} is 
	\[
	\frac{1}{m^2} + O\left(\frac{1}{T_1} + \frac{T_1 \log T_1}{T_2}\right) + \text{ Error }
	\]
	where the error term corresponds to the fact that an each interval of the form
	\[
	\left\{ x \colon \log x \in \frac{1}{T_2} \left( \left[\frac{a_2}{m}, \frac{a_2+1}{m}\right) + \ell \right) - \log N, \in \frac{1}{T_1} \left( \left[\frac{a_1}{m}, \frac{a_1+1}{m}\right) + \ell \right) - \log N \right\} 
	\]
	whose end points we might call $[y,z)$ does not correspond to exactly $N(z-y)$ natural numbers but only up to an error of size at most $1$. Thus, we simply have to estimate the number of intervals. The number of intervals in question is at most the sum of the number of times $n^{iT_1}$ and $n^{iT_2}$ go through a full rotation in the interval $\left[\frac{1}{T_1},1\right]$ and since Lemma \ref{almosteq} is trivial unless $T_1 \ll T_2$  we make restrict attention to the second term. As in the proof of Claim \ref{claim2} the number of times $n^{iT_2}$ goes through a full rotation in the interval $\left[\frac{1}{T_1},1\right]$ is $O\left( T_2 \log T_1 \right)$ which, upon dividing by $N$, completes the proof of Lemma \ref{almosteq} and thus Theorem \ref{mainthm}.
\end{proof}

\section{A Remark on Variants of the Main Theorem}
First, we show the following, giving slightly fewer details because the construction is very similar to the construction in Section \ref{mainsec}:

\begin{thm}
	For any prime number $q$, we can construct a stable set of density $d(\S) = 1 - \frac{1}{q-1}$ for which 
	\[
	\S \cap (\S+1) \cap \dots \cap (\S+q-1)
	\]
	has density 0 (and therefore by Remark \ref{density0} there exists a stable set where the intersection is empty). 
\end{thm}
We remark that this exactly matches Hildebrand's bound in \cite{Hildebrand2}. Thus, any stable set with greater density must have the relevant intersection nonempty.
 
The construction goes as follows. Fix $\chi$ a character on $(\Z / q \Z)^\times$ taking $q-1$ different values. This is possible because $(\Z / q \Z)^\times$ is a cyclic group so one can pick a generator and declare that it maps to a primitive $(q-1)^{th}$ root of unity under $\chi$, which determines such a character. As before we choose two sequences of real numbers: again
\[
| p^{i t_k} - 1 | \leq \e_k
\]
and now
\[
|p^{i T_k} \chi(p) - 1 | \leq \e_k
\]
for all $p \leq D_k$. Again, we can pick some sequence $N_{k,j}$ with the property that
\[
D_k, \frac{1}{\e_k} \ll T_k \ll t_k \ll T_{k+1} \ll N_{k,0} \ll N_{k,1} \ll \cdots \ll N_{k,J_k} \ll N_{k+1,1}
\]
where again $J_k > \frac{2}{\e_k}$.
For $n$ coprime to $q$ and between $N_{k,j}$ and $N_{k,j+1}$, we will say that $n$ is in $\S$ if and only if 
\[
\arg\left( {\chi(n) \cdot n^{iT_\ell}} \right) \not\in \left[ 0, \frac{2 \pi}{q-1} \right]
\]
where $\ell = j$ if $\Re (n^{it_k}) \geq -1 + \e_k \cdot j $ and $\ell = j+1$ if not. We remark that this essentially gives the set constructed in Section \ref{mainsec} when $q = 3$. So that the set $\S$ is still stable under multiplication by $q$, we say $q^m n \in \S$ if and only if $n \in \S$ for any $m$ and any $n$ coprime to $q$. Also as in Section \ref{mainsec},  for any $n$, because the values of $\chi(n), \ldots, \chi(n+q-1)$ include all $(q-1)^{th}$ roots of unity. Therefore, there must exist $h \leq q-1$ such that
\[
\arg\left( {\chi(n+h) \cdot n^{iT_\ell}} \right) \in \left[ 0, \frac{2 \pi}{q-1} \right]
\]
where again $\ell = j$ if $\Re (n^{it_k}) \geq -1 + \e_k \cdot j $ and $\ell = j+1$ if not. We again conclude that 
\[
\arg\left( {\chi(n+h) \cdot (n+h)^{iT_\ell}} \right) \in \left[ 0, \frac{2 \pi}{q-1} \right]
\]
unless 
\[
\arg\left(n^{iT_\ell}\right)  = O\left( \frac{q}{n} \right)
\]
or
\[
\left| \arg\left(n^{iT_\ell}\right) - \frac{2\pi}{q-1} \right| = O\left( \frac{q}{n} \right)
\]
or
\[
\Re (n^{it_k}) + 1 - \e_k \cdot j = O\left( \frac{q}{n} \right).
\]
By a similar argument as before, using Lemma \ref{almosteq}, one can show this happens only on a set of density $0$. Thus, $\S \cap \cdots \cap (\S + q)$ has density $0$. 

Also, just as in Section \ref{mainsec}, for any $p \leq D_k$ and any $n$ between $N_{k,j}$ and $N_{k,j+1}$, we have that
\begin{align*}
\chi(pn) (pn)^{i T_\ell} = &  \ \chi(p) p^{iT_\ell} \chi(n)  n^{i T_\ell} \\
 = &  \ (1 + O(\e_k)) \cdot \chi(n)  n^{i T_\ell} 
\end{align*}
for both $\ell = k$ and $k+1$. Thus, either
\[
 \arg \left( \chi(n)  n^{i T_\ell} \right)  \in \left[ 0, \frac{2 \pi}{q-1} \right] \text{ and }  \arg \left( \chi(pn)  (pn)^{i T_\ell} \right)  \in \left[ 0, \frac{2 \pi}{q-1} \right] 
\]
or 
\[
\arg \left( \chi(n)  n^{i T_\ell} \right)  \not\in \left[ 0, \frac{2 \pi}{q-1} \right] \text{ and }  \arg \left( \chi(pn)  (pn)^{i T_\ell} \right)  \not\in \left[ 0, \frac{2 \pi}{q-1} \right] 
\]
or
$n^{i T_\ell}$ is in a set of size $O(q \e_k)$. Since $q$ is fixed, this goes to $0$ as $k$ tends to infinity and by using Lemma \ref{almosteq} we can show this happens on a $0$ density set. Similarly, 
\[
(pn)^{i t_k} = (1 + O(\e_k)) n^{it_k}
\]
so either 
\[
\Re (n^{it_k}) + 1 - \e_k \cdot j \geq 0 \text{ and } \Re ((pn)^{it_k}) + 1 - \e_k \cdot j \geq 0
\]
or
\[
\Re (n^{it_k}) + 1 - \e_k \cdot j < 0 \text{ and } \Re ((pn)^{it_k}) + 1 - \e_k \cdot j < 0
\]
or $n^{it_k}$ is in an interval of size $O(\e_k)$ which again happens only on a sparse set. 

That the density of $\S$ is $1 - \frac{1}{q-1}$ can be seen because, for any $m$ and any $n = 1 \mod q$, unless $n$ is within $q$ of $N_{j,k}$ for some $j$ and $k$ or $n^{it_k}$ is in an interval of size $O\left(\frac{q}{n} \right)$, then all but one of
\[
q^m n, q^m(n+1), \ldots, q^m (n+q-2)
\]
will be in $\S$. Since the relative density of the multiples of $q$ in $\S$ is the same as the density of $\S$ by stability, the density of $\S$ is $1 - \frac{1}{q-1}$. This completes the construction.

The other interesting construction that can be made with essentially the same method relates to the Liouville function. Notice that a stable set is essentially the same as a binary valued function $f$ for which $f(mn) = 1 \cdot f(n)$ for a full density set of $n$ depending on $m$. But the same methods allow us to construct functions which are ``almost multiplicative" where $1$ is replaced by another number. For example, the widely believed Chowla conjecture suggests that if $\lambda$ is the unique completely multiplicative function for which
\[
\lambda(pn) = - \lambda(n)
\]
for all primes $p$, then
\[
\sum_{n \leq x} \lambda(n) \lambda(n+1) = o(x).
\]
However, we can construct a function for which 
\[
f(mn) = \lambda(m) f(n)
\]
for a full density set of $n$ depending on $m$ but for which the Chowla conjecture fails. This means that it is not enough to use that $f$ is multiplicative 100\% of the time to prove Chowla's conjecture. For example, methods like the Bourgain-Sarnak-Ziegler-Katai-Turan-Kubilius method which only rely on the fact that $f(mn) = \lambda(m) f(n)$ for 100\% of $n$ cannot hope to resolve the Chowla conjecture.

\begin{thm}
	There exists a function $f \colon \N \rightarrow \{\pm 1\}$ such that, for any prime $p$, the set
	\[
	\{ n \in \N \colon f(n) \neq -f(pn) \}
	\]
	has density $0$ and for which
	\[
	\frac{1}{x} \sum_{n \leq x} f(n) f(n+1) < -\e < 0
	\]
	for some $\e$ and for all $x$ sufficiently large. Furthermore
	\[
	\lim_{H \rightarrow \infty} \lim_{x \rightarrow \infty} \frac{1}{x} \sum_{n \leq x}  \left| \frac{1}{H} \sum_{h \leq H} f(n) \ch_{n = 1 \mod 3} \right|  = \frac{1}{3}.
	\]
\end{thm}

The construction proceeds similar to that in Section \ref{mainsec}. For a sequence $\e_k$ tending to $0$ and $D_k$ tending to infinity we again find sequences $T_k$, $t_k$ and $N_{k,j}$ such that 
\[
D_k, \frac{1}{\e_k} \ll T_k \ll t_k \ll T_{k+1} \ll N_{k,0} \ll N_{k,1} \ll \cdots \ll N_{k,J_k} \ll N_{k+1,1}
\]
and
\[
p^{i t_k} = 1 + O(\e_k)
\]
for all $p \leq D_k$ not equal to 3 but now the key difference is that
\[
p^{iT_k} = -\chi(p) +O(\e_k)
\]
where $\chi$ is again the real character modulo 3 i.e. $\chi(n) = 1$ if $n = 1 \mod 3$ and $\chi(n) = -1$ if $n = -1 \mod 3$. For $n$ between $N_{k,j}$ and $N_{k,j+1}$ coprime to $3$, we set
\[
f(n) = \sgn \left( \Re \left( \chi(n) n^{iT_\ell} \right) \right)
\]
where $\ell = k$ if $\Re(n^{it_k}) \geq -1 + j \e_k$ and $k+1$ otherwise. Now because of the added minus sign
\begin{align*}
	f(pn) = & \sgn \left( \Re \left( \chi(pn) (pn)^{iT_\ell} \right) \right) \\
	= & \sgn \left( \Re \left( (-1 + O(\e_k) ) \chi(n) n^{iT_\ell} \right) \right)
	\intertext{
	which equals
	}
	= & - \sgn \left( \Re \left( \chi(n) n^{iT_\ell} \right) \right) \\
	= & - f(n)
\end{align*}
unless one of $n^{iT_k}$, $n^{iT_{k+1}}$ or $n^{it_k}$ are in a set of size $O(\e_k)$ which again happens on a set of density $0$. We set $f(3^m n) = (-1)^m f(n)$. Then $f(n) f(n+1)$ is always equal to $-1$ when $n = 1 \mod 3$ but is equally likely to be $\pm 1$ for $n = 0, 2 \mod 3$, so the two points correlations approach $-\frac{1}{3}$. Furthermore, by Taylor expansion, for any fixed $H$, unless one of $n^{iT_k}$, $n^{iT_{k+1}}$ or $n^{it_k}$ are in a set of size $O\left( \frac{H}{n} \right)$, we have $f(n + h)$ is constant for all $h$ such that $n+h = 1 \mod 3$ giving the desired property.

	\begin{bibdiv}
		\begin{biblist}
			
			\bib{MR1619809}{article}{
				author={Balog, Antal},
				author={Wooley, Trevor D.},
				title={On strings of consecutive integers with no large prime factors},
				journal={J. Austral. Math. Soc. Ser. A},
				volume={64},
				date={1998},
				number={2},
				pages={266--276},
				issn={0263-6115},
				review={\MR{1619809}},
			}
		
		\bib{BSZ}{article}{
			author={Bourgain, J.},
			author={Sarnak, P.},
			author={Ziegler, T.},
			title={Disjointness of Moebius from horocycle flows},
			conference={
				title={From Fourier analysis and number theory to Radon transforms and
					geometry},
			},
			book={
				series={Dev. Math.},
				volume={28},
				publisher={Springer, New York},
			},
			isbn={978-1-4614-4074-1},
			isbn={978-1-4614-4075-8},
			date={2013},
			pages={67--83},
			review={\MR{2986954}},
			doi={10.1007/978-1-4614-4075-8\_5},
		}
		
		\bib{Erdos1976}{inproceedings}{
			author={Erd\H{o}s, P.},
			title={Problems and results on number theoretic properties of consecutive integers and related questions},
			booktitle={Proceedings of the Fifth Manitoba Conference on Numerical Mathematics, 1975},
			date={1976},
			pages={25--44},
			note={Renyi Institute Preprint, No.~39/1976},
			url={https://users.renyi.hu/~p_erdos/1976-39.pdf},
		}
	
	\bib{MR2467549}{article}{
		author={Granville, Andrew},
		title={Smooth numbers: computational number theory and beyond},
		conference={
			title={Algorithmic number theory: lattices, number fields, curves and
				cryptography},
		},
		book={
			series={Math. Sci. Res. Inst. Publ.},
			volume={44},
			publisher={Cambridge Univ. Press, Cambridge},
		},
		isbn={978-0-521-80854-5},
		date={2008},
		pages={267--323},
		review={\MR{2467549}},
	}

			\bib{Hildebrand1}{article}{,
				author =  {Hildebrand, Adolf},
				date = {1985},
				journal = {Proceedings of the American Mathematical Society},
				pages = {517--523},
				title = {On a conjecture of Balog},
				volume = {95},
				year = {1985},
			}			
			
			\bib{Hildebrand2}{article}{,
				author =  {Hildebrand, Adolf},
				date = {1989},
				journal = {Mathematika},
				pages = {60--70},
				title = {On integer sets containing strings of consecutive integers},
				volume = {36},
				year = {1989},
			}
		\bib{MR3435814}{article}{
			author={Matom\"aki, Kaisa},
			author={Radziwi\l\l, Maksym},
			author={Tao, Terence},
			title={An averaged form of Chowla's conjecture},
			journal={Algebra Number Theory},
			volume={9},
			date={2015},
			number={9},
			pages={2167--2196},
			issn={1937-0652},
			review={\MR{3435814}},
			doi={10.2140/ant.2015.9.2167},
		}
		
		\bib{MRT}{article}{,
			author = {Matom\"aki, Kaisa} 
			author =  {Radziwi\l\l, Maksym}
			author =  {Tao, Terence},
			date = {2020},
			doi = {10.1007/s00222-019-00926-w},
			isbn = {1432-1297},
			journal = {Inventiones mathematicae},
			number = {1},
			pages = {1--58},
			title = {Fourier uniformity of bounded multiplicative functions in short intervals on average},
			url = {https://doi.org/10.1007/s00222-019-00926-w},
			volume = {220},
			year = {2020},
		}
		\bib{TJ}{article}{
			author={Tao, Terence},
			author={Ter\"av\"ainen, Joni},
			title={Value patterns of multiplicative functions and related sequences},
			journal={Forum of Mathematics, Sigma},
			volume={7},
			pages={e33},
			year={2019},
			date={2019},
		}
		\bib{Joni}{article}{
	author={Ter\"av\"ainen, Joni},
	title={On binary correlations of multiplicative functions},
	journal={Forum of Mathematics, Sigma},
	volume={6},
	pages={e10},
	year={2018},
	organization={Cambridge University Press}
}

		\end{biblist}
	\end{bibdiv}

\end{document}